\documentclass[11pt,reqno]{amsart}
\usepackage[margin=1in]{geometry}
\usepackage[T1]{fontenc}
\usepackage{lmodern}
\usepackage{microtype}
\usepackage{amsmath,amssymb,amsthm,url}
\usepackage[utf8]{inputenc}
\usepackage[english]{babel}
\usepackage{enumerate}

\usepackage{graphicx}
\usepackage{mathrsfs}
\usepackage[colorlinks=true, pdfstartview=FitH, linkcolor=blue, citecolor=blue, urlcolor=blue]{hyperref}
\usepackage{mathtools}

\newtheorem{theorem}{Theorem}[section]
\newtheorem{lemma}[theorem]{Lemma}
\newtheorem{proposition}[theorem]{Proposition}
\newtheorem{corollary}[theorem]{Corollary}

\newtheorem{conjecture}[theorem]{Conjecture}

\theoremstyle{definition}

\newtheorem{remark}[theorem]{Remark}

\newcommand{\F}{\mathbb F}
\newcommand{\Z}{\mathbb Z}
\newcommand{\E}{\mathbb E}

\newcommand{\unif}{\mathrm{unif}}
\newcommand{\Prob}{\mathbb P}

\title{A sharp inverse theorem for the quadratic large sieve}
\author{Ernie Croot}
\address{School of Mathematics\\ Georgia Institute of Technology\\ GA\\ United States}
\email{ernest.croot@math.gatech.edu}
\author{Junzhe Mao}
\address{School of Mathematics\\ Georgia Institute of Technology\\ GA\\ United States}
\email{jmao87@gatech.edu}
\author{Cosmin Pohoata}
\address{Department of Mathematics, Emory University, GA, United States}
\email{cosmin.pohoata@emory.edu}
\author{Chi Hoi Yip}
\address{Department of Mathematics, Hong Kong University of Science and Technology, Clear Water Bay, Hong Kong}
\email{machyip@ust.hk}
\subjclass[2020]{11N35, 11B30, 11P70}
\keywords{Inverse sieve conjecture, quadratic residues, inverse Goldbach problem}

\date{}

\begin{document}
\begin{abstract}
We prove that if
$A\subseteq[N]$, $|A|\gg\sqrt N$, and $|A_p|\le p/2+O(1)$ for every
prime $p\ll \log N$, then $A$ contains at least $\exp\left(c\sqrt{\log N}/\log\log N\right)$ elements in the image of a single integral quadratic $m\pm x^2$. This significantly improves Hanson's logarithmic lower bound, while requiring the residue restriction only for primes $p\ll\log N$. Our proof uses a new weighted entropy argument inspired by our previous work \href{https://arxiv.org/abs/2606.17487}{arXiv:2606.17487} with Sheffer. A matching construction shows that this exponential scale is optimal even when the quadratic may be chosen arbitrarily in $\Z[x]$. We also show extending the residue restrictions to primes up to $(\log N)^3$ yields the stronger bound $\exp(c\sqrt{\log N/\log\log N})$ via a Selberg-sieve-type argument, and discuss an application to the inverse Goldbach problem.
\end{abstract}

\maketitle 
\section{Introduction}

For a positive integer $N$, write $[N]=\{1,\ldots,N\}$. If $A\subseteq[N]$
and $p$ is prime, let $A_p=\{a\bmod p:a\in A\}$. The large sieve
\cite{Montgomery1978} and Gallagher's larger sieve \cite{Gallagher1971}
bound $|A|$ from above in terms of the sizes of the sets $A_p$; see
\cite{FriedlanderIwaniec} for background. The inverse sieve problem asks
whether sets close to these bounds must have algebraic structure.
This question has been studied in, among other works,
\cite{CrootLev2007,HelfgottVenkatesh2009,Walsh2012,Walsh2014,
GreenHarper2014,Shao2015,Shao2016,Hanson2020,CrootMaoYip2025}.

We consider the quadratic threshold. The condition
$|A_p|\le p/2+O(1)$ for every prime implies $|A|\ll\sqrt N$ by either
sieve. The squares show that this bound is sharp up to a constant factor.
More generally, a fixed rational quadratic occupies about half the residue
classes at every sufficiently large prime, with finitely many exceptions
depending on its coefficients and denominators. The following is a weak
form of the quadratic inverse sieve conjecture; see
\cite[Conjecture~1.1]{Hanson2020} and \cite[Problem~47]{Green}.
For a related earlier formulation, see \cite[Section~7.4]{CrootLev2007}.

\begin{conjecture}\label{conj1}
For every $\alpha>0$ there is a $\delta=\delta(\alpha)>0$ such that, for
all sufficiently large $N$, the following holds. If $A\subseteq[N]$ satisfies
$|A|\ge\alpha\sqrt N$ and $|A_p|\le(p+1)/2$ for every prime $p\leq \sqrt{N}$, then
there is a polynomial $q\in\mathbb Q[x]$ of degree two with
\[
        |A\cap q(\Z)|\ge\delta|A|.
\]
\end{conjecture}

Hanson \cite[Corollary~1.3]{Hanson2020} proved an unconditional result at
the square-root threshold: under $|A|\gg\sqrt N$ and
$|A_p|\le p/2+O(1)$ for every prime, some integral quadratic $m-x^2$
contains $\gg\log N$ elements of $A$. Our main theorem improves this
bound while requiring the residue restriction only at primes up to $\frac14\log N$.

\begin{theorem}\label{thm:main}
Fix $\alpha>0$ and $C\ge0$. There are constants $c=c(\alpha,C)>0$ and
$N_0=N_0(\alpha,C)$ such that the following holds. Let $N\ge N_0$, and
let $A\subseteq[N]$ satisfy $|A|\ge\alpha\sqrt N$ and
\[
        |A_p|\le\frac p2+C
        \qquad\text{for every prime }p\le\frac14\log N.
\]
Then there is an integral quadratic $q(x)=m\pm x^2$ such that
\[
        |A\cap q(\Z)|
        \ge\exp\left(c\frac{\sqrt{\log N}}{\log\log N}\right).
\]
\end{theorem}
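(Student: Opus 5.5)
We plan an entropy argument of the kind used in our earlier work with Sheffer. Write $P=\{p\text{ prime}: p\le \frac14\log N\}$ and $Q=\prod_{p\in P}p$. Then $Q=N^{1/4+o(1)}$, so $Q$ is much smaller than $N$ but still a power of $N$. Let $X$ be uniform on $A$, and consider the residue vector $(X\bmod p)_{p\in P}$, which by the Chinese remainder theorem is $X\bmod Q$.

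\textbf{Step 1: sieve side.} The fibres of $X\mapsto X\bmod Q$ inside $[N]$ have size at most $N/Q+1$. Hence
\[
H(X\bmod Q)\ \ge\ \log|A|-\log(N/Q+1)\ \ge\ \tfrac12\log N-\log(N/Q)-O(1).
\]
On the other hand, subadditivity gives
\[
H(X\bmod Q)\le\sum_{p\in P}H(X\bmod p)\le\sum_{p\in P}\log(p/2+C).
\]
The right side is $\log Q-|P|\log 2+O(\sum_p C/p)$, and the correction term is $O(\log\log\log N)$. So we get a mismatch unless the $X\bmod p$ are nearly uniform on about $p/2$ classes and nearly independent. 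These one-sided bounds alone are not contradictory, since $\log Q\approx\frac14\log N$. The real content is a \emph{weighted} version. For a suitable weight $w_p$ we compare $\sum_p w_p\,[\log(p/2+C)-H(X\bmod p\mid X\bmod p',\,p'<p)]$, which is a nonnegative deficit sum, against the total budget $\log|A|-\frac12\log N\ge\log\alpha$. This shows that for all but a small set of primes the conditional entropy loss is $O(1/\text{weight})$. We choose the weights to emphasize primes near $\sqrt{\log N}$.

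\textbf{Step 2: from near-maximal conditional entropy to quadratic structure.} Fix a prime $p$ and condition on a typical fibre $F$ of $X\bmod Q_{<p}$. The residues of $F$ mod $p$ are near uniform on a set $S_p$ of size about $p/2$, and the same holds for differences. Take two elements $a,a'\in F$. They agree mod every $p'<p$, and their difference is a multiple of $Q_{<p}$, which is about $e^{p}$. I would use the pair structure, viewing $a+a'$ and $a-a'$, as Hanson does. The sets $A_p$ having size $p/2$ for many $p$ with small entropy loss forces $A_p$ to be close to a translate of the quadratic residues or nonresidues: an additive energy or Fourier argument applied to near-uniform half-size sets that are stable under the conditioning. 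This produces $m_p$ and a sign $\epsilon_p$ with $A_p\approx m_p+\epsilon_p\cdot\square$.

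\textbf{Step 3: gluing.} By CRT we glue the $m_p$ into $m\bmod Q'$ for a product $Q'$ of typical primes, and we pigeonhole the sign. Then $A$ concentrates on $\{a: \epsilon(m-a)\text{ is a square mod }p\ \forall p\mid Q'\}$. A dense subset of $A$ with this property for about $\sqrt{\log N}/\log\log N$ primes of size about $\sqrt{\log N}$ needs Gallagher's larger sieve in reverse: with $k$ primes, each imposing density $1/2$ on the values $m-a$, the count of $a$ with $\epsilon(m-a)$ an actual perfect square must be at least $2^{-k}$ times the count among $|A|$. Then $k\asymp\sqrt{\log N}/\log\log N$ gives the stated bound $\exp(c\sqrt{\log N}/\log\log N)$. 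Concretely, the number of $a\le N$ satisfying $k$ quadratic-residue conditions is about $N2^{-k}$, while $|A|\gg\sqrt N$. Combining this with the entropy deficit localizes $A$ inside the squares-shifted set up to a loss $e^{O(k\log k)}$, which is where the $\log\log N$ in the denominator comes from.

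\textbf{Main obstacle.} I expect Step 3 to be the hardest part: converting "quadratic residue mod many small primes" into "actual perfect square." The set of integers that are quadratic residues modulo all $p\le y$ is much larger than the set of squares. So I would try to combine this with the archimedean constraint $a\in[N]$ and the fibre structure, where differences are divisible by large $Q_{<p}$, so that within a fibre the candidates $m-a=x^2$ are forced to form an arithmetic-progression-like set of $x$. Failing that, I would use a Bombieri–Pila/Hanson-type counting of lattice points on the conic within the fibre to extract the stated number of elements on one quadratic $m\pm x^2$.
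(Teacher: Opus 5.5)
Your proposal has a genuine gap at Step 2, and Step 1 cannot supply what Step 2 needs.

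\textbf{Step 2 is false under these hypotheses.} You claim that near-maximal conditional entropy forces $A_p\approx m_p+\epsilon_p\cdot\square$. The paper's matching construction (Theorem~\ref{thm:sharpness}) is a counterexample. There $A$ is a random $\lfloor\sqrt N\rfloor$-subset of those $n\in[N]$ with $n\bmod p\in H_p$ for all $p\le\frac14\log N$. Each $H_p$ has $|H_p|=(p-1)/2$ and satisfies $\bigl|\sum_{y\in H_p}\chi_p(y-s)\bigr|\ll\sqrt p$ for every $s$ (Lemma~\ref{lem:unif_shift}).
\begin{itemize}
\item This $A$ satisfies every hypothesis of the theorem.
\item For a typical choice, $X\bmod Q$ is close to uniform on the admissible classes, so each of your conditional-entropy deficits is $O(1/p)$.
\item Yet each $H_p$ meets every translate of the squares or nonsquares in only $p/4+O(\sqrt p)$ points.
\end{itemize}
So no additive-energy or Fourier argument can produce $m_p,\epsilon_p$.

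\textbf{Step 1 has no budget.} With restrictions only up to $\frac14\log N$, your own Step 1 bounds the total deficit only by about $\log N-\log|A|\approx\frac12\log N$. The quantity $\log|A|-\frac12\log N$ acts as a budget only for the genuine large sieve, with primes up to $\sqrt N$.

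\textbf{The numerology contradicts the sharpness theorem.} If a full factor $2$ per prime were available near $\sqrt{\log N}$, your argument would give it at all $\asymp\log N/\log\log N$ primes up to $\frac14\log N$. That would yield $N^{c/\log\log N}$, which Theorem~\ref{thm:sharpness} rules out. The true mechanism is a gain of only $\asymp p^{-1/2}$ at each of $\asymp\log N/\log\log N$ primes $p\asymp\log N$. That is where $\sqrt{\log N}/\log\log N$ comes from.

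\textbf{What is missing: a structure-free local gain.} Put $\theta_s=\sum_y\nu_p(y)\chi_p(y-s)$. Then $\sum_s\theta_s^2=p\sum_y\nu_p(y)^2-1\ge p/|A_p|-1\gg1$. Mix the uniform measure slightly toward $\sigma(s)\propto(\varepsilon\theta_s)_+$ to get $\lambda_p$. Then $Z_p(y)=\sum_s\lambda_p(s)(1+\varepsilon\chi_p(y-s))\ge1/2$ and $\sum_y\nu_p(y)\log Z_p(y)\gg p^{-1/2}$ (Proposition~\ref{prop:local-entropy}).

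\textbf{Combining primes.} Jensen gives $\frac1{|A|}\sum_a\prod_pZ_p(a)\ge G:=\exp\bigl(\sum_p\sum_y\nu_p(y)\log Z_p(y)\bigr)$, with no independence across primes. Restrict to $p\equiv3\pmod4$, so $\chi_p(-1)=-1$. Then, for either sign, the product is an average over shifts of the root count $\#\{x\bmod\Delta:a\equiv S+\varepsilon x^2\pmod\Delta\}$.

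\textbf{Your ``main obstacle'' is then an elementary pigeonhole} (Proposition~\ref{prop:global-amplification}). Take $\Delta\le\lfloor\sqrt N\rfloor$.
\begin{itemize}
\item There are $\gg|A|G\sqrt N/\Delta$ pairs $(a,x)\in A\times[\lfloor\sqrt N\rfloor]$ with $a\equiv S+\varepsilon x^2\pmod\Delta$.
\item For each pair, $m=a-\varepsilon x^2\equiv S\pmod\Delta$ lies in $[1-N,2N]$, so it takes only $O(N/\Delta)$ values.
\item Hence some $m$ gives $\gg|A|G/\sqrt N$ points of $A$ on $m+\varepsilon\Z^2$.
\end{itemize}
No Bombieri--Pila bound or fibre analysis is needed, and $A_p$ never has to resemble a translate of the squares.
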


The factor $1/4$ in the cutoff is inessential: the same lower bound
holds with $p\le\theta\log N$ for any fixed $\theta>0$, with constants
depending also on $\theta$. We next give a construction showing that this
exponential scale remains optimal even when one maximizes the intersection
over all polynomials in $\Z[x]$ of degree two.

\begin{theorem}\label{thm:sharpness}
Fix $0<\theta<1/2$. There are constants
$K=K(\theta)>0$ and $N_1=N_1(\theta)$ such that,
for every $N\ge N_1$, there is a set $A\subseteq[N]$ with
\[
        |A|=\lfloor\sqrt N\rfloor,\qquad
        |A_p|\le\frac p2
        \quad\text{for every prime }p\le\theta\log N,
\]
and
\[
        \sup_{\substack{q\in\Z[x]\\ \deg q=2}}
        |A\cap q(\Z)|
        \le\exp\left(K\frac{\sqrt{\log N}}{\log\log N}\right).
\]
\end{theorem}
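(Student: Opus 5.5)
The construction takes, for every prime $p\le y:=\theta\log N$, a set $S_p\subseteq\F_p$ of size at most $p/2$ whose preimage under \emph{every} quadratic has density $1/2+O(p^{-1/2})$. Let $B$ be the integers in $[N]$ reducing into $S_p$ for all $p\le y$, and take $A$ to be a random $\lfloor\sqrt N\rfloor$-subset of $B$. Then $|A_p|\le|S_p|\le p/2$ automatically, and the work is to show that every quadratic meets $A$ in few points with high probability.

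\textbf{Step 1: choice of $S_p$.} Fix a cubic $f\in\Z[x]$ such that $f\circ q$ is not a constant times a square in $\overline{\F}_p[x]$ for any quadratic $q$ over $\F_p$ and any $p>P_0$; for instance $f(y)=y^3+y+1$, taking more care if needed. With $\chi_p$ the Legendre symbol, set $S_p=\{y:\chi_p(f(y))=1\}$ and then delete $O(\sqrt p)$ elements so that $|S_p|\le p/2$. By Weil's bound, $\sum_x\chi_p(f(q(x)))\ll\sqrt p$, and also $|S_p|=p/2+O(\sqrt p)$. Hence, for every quadratic $q$ over $\F_p$,
\[
r_p(q):=\frac{\#\{x\in\F_p:q(x)\in S_p\}/p}{|S_p|/p}=1+O(p^{-1/2}).
\]
For the finitely many $p\le P_0$, take any $S_p$ of size $\lfloor p/2\rfloor$ and use the trivial bound $r_p\le p/|S_p|\le 3$.

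Combining these, uniformly over all $q\in\Z[x]$ of degree two,
\[
\prod_{p\le y}r_p(q)\le\exp\Bigl(O(1)+O\Bigl(\sum_{p\le y}p^{-1/2}\Bigr)\Bigr)=\exp\Bigl(O\Bigl(\frac{\sqrt{\log N}}{\log\log N}\Bigr)\Bigr)=:e^{E}.
\]
This is exactly where the $\sqrt{\log N}/\log\log N$ scale comes from.

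\textbf{Step 2: counting.} Let $Q=\prod_{p\le y}p=N^{\theta+o(1)}$ and $\rho=\prod_{p\le y}|S_p|/p$. Since $\rho\ge N^{-o(1)}$ and $Q\le\sqrt N$ (here we use $\theta<1/2$), we get $|B|=N\rho(1+o(1))\gg\sqrt N$.

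Fix $q(x)=ax^2+bx+c$. The $x$ with $q(x)\in[N]$ form at most two intervals of total length $L\le 2\sqrt{N/|a|}+2\ll\sqrt N$. By the Chinese remainder theorem, the number of residues $x\bmod Q$ with $q(x)\in\prod_p S_p$ is $Q\rho\prod_p r_p(q)$. Hence
\[
\#\{x:q(x)\in B\}\le(L/Q+1)\,Q\rho e^{E}=(L+Q)\rho e^{E}\ll\sqrt N\rho e^{E}.
\]
Each element of $B$ lies in $A$ with probability $\asymp\sqrt N/(N\rho)$, so the expected value of $|A\cap q(\Z)|$ is $\mu\ll e^{E}$. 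This holds even when $L<Q$, which is the case one might worry about.

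\textbf{Step 3: concentration and the union bound.} By a Chernoff (or hypergeometric) tail bound, with $t=\exp(K\sqrt{\log N}/\log\log N)\ge e^{2}\mu+10\log N$ we get
\[
\Prob\bigl(|A\cap q(\Z)|\ge t\bigr)\le(e\mu/t)^{t}\le N^{-10}.
\]
To take a union bound, note that $A\cap q(\Z)$ depends only on $q(\Z)\cap[N]$, which is unchanged if we replace $x$ by $x+k$ or $x$ by $-x$. We may therefore normalize so that $0\le b<2|a|$, which forces $|a|,|b|,|c|\le N^{O(1)}$ for any $q$ taking at least one value in $[N]$. That leaves only $N^{O(1)}$ quadratics, and the union bound gives a set $A$ with the required property.

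The main obstacle is Step 1: choosing $f$ so that $f\circ q$ is never a constant times a square, uniformly over all quadratics $q$ and all large $p$. If $f$ is a squarefree cubic, then $f\circ q$ has degree $6$, and for generic $q$ it has simple roots. One must rule out a degenerate $q$ for which all roots of $f\circ q$ pair up into double roots. That would require each root $\beta$ of $f$ to be the critical value of $q$, and there are three distinct such $\beta$, so it fails after a check of the degenerate cases. A fallback is to use a random $S_p$ with the bound $1+O(\sqrt{\log p/p})$; this costs an extra $\sqrt{\log\log N}$ factor in $E$.
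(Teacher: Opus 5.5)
Your construction matches the paper's: the Legendre symbol of a squarefree cubic, the Weil bound for pullbacks, the CRT set $B$, a random $\lfloor\sqrt N\rfloor$-subset, the $(e\mu/t)^t$ tail and a union bound. Your resolution of the ``main obstacle'' is also correct, since at most one root of $f$ can equal the critical value of $q$, so $f\circ q$ has a simple root.

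\textbf{The gap.} The failing step is your claim that $\prod_{p\le y}r_p(q)\le e^{E}$ holds uniformly over all $q\in\Z[x]$ of degree two. Step~1 only controls $r_p(q)$ when $q \bmod p$ is a genuine quadratic, i.e.\ when $p\nmid a$. If $p\mid a$ and $p\mid b$, then $q$ is constant modulo $p$. In that case $r_p(q)\in\{0,p/|S_p|\}$, which is about $2$ rather than $1+O(p^{-1/2})$.

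For example, take $q(x)=Qx^2+c$ with $c\in B$. This $q$ is constant modulo every $p\le y$, so
\[
\prod_p r_p(q)=1/\rho\ge 2^{\pi(y)}=\exp\bigl((\theta\log 2+o(1))\log N/\log\log N\bigr),
\]
which is far larger than $e^{E}$. If also $c\le N/2$, this $q$ takes many values in $[N]$, so it belongs to the family you union-bound over. Its true count is small only because just $O(\sqrt{N/Q})$ values of $x$ are available. Step~2 discards exactly this information when you write $L\le 2\sqrt{N/|a|}+2\ll\sqrt N$. So the bound $\#\{x:q(x)\in B\}\ll\sqrt N\rho e^{E}$ is not justified as written.

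\textbf{The repair.} This is what the paper does, and the $|a|$-dependence is essential.
\begin{itemize}
\item For $p\mid a$ use the trivial bound $r_p(q)\le p/|S_p|\le 3$; it equals $1$ if $p\nmid b$. This gives $\prod_p r_p(q)\le e^{E}\,3^{\omega(|a|)}$.
\item Keep $L\le 2\sqrt{N/|a|}+O(1)$ instead of $L\ll\sqrt N$.
\item The main term is then $\sqrt{N/|a|}\,3^{\omega(|a|)}\ll\sqrt N$, because $3\le\sqrt p$ for $p\ge 11$.
\item The boundary term is $Q\,3^{\pi(y)}=N^{\theta+o(1)}=o(\sqrt N)$. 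This is where $\theta<1/2$ is genuinely needed, not merely to get $|B|\gg\sqrt N$.
\end{itemize}

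\textbf{A smaller slip.} The normalization $0\le b<2|a|$ does not bound $|a|$ for quadratics taking only one value in $[N]$; for instance $ax^2+1$ with $|a|$ arbitrary. You should restrict to quadratics with at least three distinct values in $[N]$, where interpolation gives $|a|\le N-1$. Quadratics meeting $[N]$ in at most two points are trivially fine.
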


The supremum imposes no bound on the coefficients of $q$, which may
depend on $N$. Taking $\theta=1/4$ gives a matching upper bound under
exactly the prime range in Theorem~\ref{thm:main}. We prove
Theorem~\ref{thm:sharpness} in Section~\ref{sec:upper}.

Green and Harper \cite[Theorem~1.6]{GreenHarper2014} proved a
three-alternative result for an infinite set $\mathcal A$ with
$|\mathcal A_p|\le(p+1)/2$ for every sufficiently large prime. Besides
eventual containment in a rational quadratic and small counting functions
along a sequence, their theorem permits a third alternative: intersection
at most $X^{1/4+o_\psi(1)}$ with each fixed rational quadratic $\psi(\mathbb Q)$
inside $[X]$. They conjectured that this third alternative is unnecessary.

\begin{conjecture}[Green--Harper]\label{conj:GH17}
Let \(\mathcal A\) be a set of positive integers such that \(|\mathcal A_p|\leq (p+1)/2\) for all sufficiently large primes \(p\).  Then one of the following alternatives holds:
\begin{enumerate}
\item there is a rational quadratic \(\psi\) such that all but finitely many
      elements of \(\mathcal A\) lie in \(\psi(\mathbb Q)\);
\item for every integer \(k\), there are arbitrarily large values of \(X\) for
      which
      \[
        |\mathcal A\cap [X]|<\frac{X^{1/2}}{(\log X)^k}.
      \]
\end{enumerate}
\end{conjecture}

The small-prime argument also gives the following density-dependent estimate.

\begin{theorem}\label{thm:density}
Fix $C\ge0$. There are constants $c=c(C)>0$ and $N_0=N_0(C)$ such that,
if $N\ge N_0$ and $A\subseteq[N]$ is nonempty with
\[
        |A_p|\le p/2+C\qquad\text{for every prime }p\le\frac14\log N,
\]
then some integral quadratic $q(x)=m\pm x^2$ satisfies
\[
        |A\cap q(\Z)|\gg_C\frac{|A|}{\sqrt N}
        \exp\left(c\frac{\sqrt{\log N}}{\log\log N}\right).
\]
\end{theorem}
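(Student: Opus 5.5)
The plan is to sieve $A$ with a product of small primes, find two elements whose difference is highly divisible, and then lift the coincidence of many such differences with a fixed difference of squares to membership in a single quadratic $m\pm x^2$. Set $y=\frac14\log N$ and $Q=\prod_{p\le y}p=N^{1/4+o(1)}$. Each prime $p\le y$ occupies at most $p/2+C$ residue classes, so by the Chinese remainder theorem $A$ meets at most $\prod_{p\le y}(p/2+C)$ classes modulo $Q$. This is $Q\,2^{-\pi(y)}\exp(O_C(\sqrt y))$, since the primes $p\le 4C$ contribute only a bounded factor.

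\textbf{Step 1 (concentration).} Pigeonholing, some class $r\bmod Q$ contains
\[
A'=A\cap(r+Q\Z),\qquad |A'|\ge |A|\,2^{\pi(y)}e^{-O(\sqrt y)}/Q .
\]
This alone is too weak, so I would not use the full modulus $Q$. Instead I would use the weighted entropy argument announced in the abstract. Let $a$ be uniform in $A$ and consider the random variables $a\bmod p$ for $p\le y$. The total entropy $H(a\bmod Q)\le\log|A|$, while each coordinate has $H(a\bmod p)\le\log(p/2+C)$.

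The key structural input is the following. For a quadratic residue pattern, the residues mod $p$ of elements lying on a common curve $m-x^2$ occupy exactly $(p+1)/2$ classes. Hence the ``deficiency'' $\log p-H(a\bmod p)\ge\log 2-O(C/p)$ at each prime forces a correlation between $a$ and a square-type structure.

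\textbf{Step 2 (pair differences).} I would look at pairs $a,a'\in A$ that agree modulo $p$ for many $p$. Since $|A_p|\le p/2+C$, collision counting shows that two random elements agree mod $p$ with probability at least about $2/p$ rather than $1/p$. I would choose a set $S$ of primes in $[\sqrt y,y]$, with weights, so that $\sum_{p\in S}\log(2)$ exceeds $\sum_{p\in S}\log p$ minus $\log(N/|A|^2)$. Entropy subadditivity then yields many pairs $(a,a')$ with $a\equiv a'\pmod{\prod_{p\in S}p}$.

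Writing $a=m-x^2$ and $a'=m-x'^2$ for the target quadratic, each difference $a-a'=(x'-x)(x'+x)$ is a product of two factors. Matching these factorizations against the divisibility by $\prod_{p\in S}p$, and choosing which factor each prime divides, is what converts collisions into a common $m$. With $|S|\approx\sqrt{\log N}/\log\log N$, the count of compatible pairs gives $\exp(c\sqrt{\log N}/\log\log N)$ elements on one quadratic. For Theorem~\ref{thm:density} the loss is exactly the factor $|A|/\sqrt N$, because the collision gain at each step is proportional to $|A|^2/N$.

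The main obstacle is Step 2's lifting. Knowing many $a$ are congruent to one another modulo a product of primes does not by itself place them on a single $m\pm x^2$. One needs to show that the half-residue condition, via the quadratic character, forces $A_p$ to be essentially a translate of $\pm$ squares for most $p$. This is the stability of the equality case in Cauchy--Davenport/Vosper-type bounds for $|A_p+A_p|$. One then needs to glue these local structures across primes by CRT into a global $m$ and sign, with the $\exp(-O(\sqrt y))$ error from the $+C$ budget absorbed by restricting to $|S|\ll\sqrt y/\log y$ primes. I would try first to prove local rigidity: $|A_p|\le p/2+C$ with $A$ dense forces $A_p=m_p\pm\{\text{squares}\}$ up to $O(C)$ elements for a positive proportion of $p$.
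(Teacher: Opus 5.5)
There is a genuine gap, and it is the one you flag yourself: nothing in the proposal converts the residue restriction into membership in a quadratic. Steps~1 and~2 only produce many elements of $A$ that agree modulo a product of primes. That is concentration in residue classes, which, as you note, is far too weak. The identity $a-a'=(x'-x)(x'+x)$ presupposes that $a,a'$ already lie on the target curve $m-x^2$, so it cannot be used to locate $m$.

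More seriously, the proposed repair is false. The bound $|A_p|\le p/2+C$ does not force $A_p$ to be within $O(C)$ elements of a translate of $\pm$ squares, even when $|A|\asymp\sqrt N$. The sets $H_p$ of Lemma~\ref{lem:unif_shift} have size $(p-1)/2$, yet meet every set $\{s+az^2:z\in\F_p\}$ in at most $p/4+O(\sqrt p)$ points. The construction of Section~\ref{sec:upper} (with $\theta=1/4$) produces $A$ with $|A|=\lfloor\sqrt N\rfloor$ and $A_p\subseteq H_p$ for every odd $p\le\frac14\log N$. So rigidity fails at every sufficiently large prime in the relevant range. Vosper-type stability is also beside the point: the hypothesis gives no information on $|A_p+A_p|$, and Vosper's equality case consists of arithmetic progressions, not squares.

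Finally, your exponent is matched by choosing $|S|$ rather than derived. A constant-factor gain per prime is impossible: applying it to all $\asymp\log N/\log\log N$ primes up to $\frac14\log N$, whose product is $N^{1/4+o(1)}$, would contradict Theorem~\ref{thm:sharpness}.

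The missing idea is that the paper needs only a weak, quantitative local bias, not rigidity.

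\textbf{Local bias.} For $p$ large in terms of $C$, the residue distribution $\nu_p$ of $A$ satisfies $W_p=p\sum_y\nu_p(y)^2-1\ge p/|A_p|-1\ge1/2$. The correlation identity $\sum_s\chi_p(y-s)\chi_p(z-s)=p\mathbf{1}_{y=z}-1$ turns this into $\sum_s\theta_p(s)^2=W_p$, where $\theta_p(s)=\sum_y\nu_p(y)\chi_p(y-s)$.

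\textbf{Local weight.} Weight the shifts by $(\varepsilon\theta_p(s))_+$ and mix with the uniform measure. This gives $Z_p=\sum_s\lambda_p(s)(1+\varepsilon\chi_p(\cdot-s))\ge1/2$ with $\sum_y\nu_p(y)\log Z_p(y)\gg p^{-1/2}$ (Proposition~\ref{prop:local-entropy}).

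\textbf{Combining primes.} For $p\equiv3\pmod4$, $1+\varepsilon\chi_p(a-s)$ counts solutions of $a\equiv s+\varepsilon x^2\pmod p$. After fixing a common sign, Jensen's inequality applied to $\sum_p\log Z_p(a)$ combines the primes with no independence of the residue distributions. It yields CRT shifts whose root count modulo $\Delta\le\sqrt N$ averages at least $\exp(c\sum_{p\mid\Delta}p^{-1/2})$ over $A$.

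\textbf{Lifting.} Count pairs $(a,x)\in A\times[\lfloor\sqrt N\rfloor]$ with $a\equiv S+\varepsilon x^2\pmod\Delta$ and pigeonhole $m=a-\varepsilon x^2$. Summing $p^{-1/2}$ over primes $p\equiv3\pmod4$ in $(z/2,z]$, $z=\eta\log N$, gives $\asymp\sqrt{\log N}/\log\log N$ and hence the theorem.
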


Theorem~\ref{thm:density} gives a finite quantitative inverse result in the
direction of Conjecture~\ref{conj:GH17}: if \(A\) remains close to the
large-sieve threshold, then \(A\) already has a growing intersection with one
quadratic image. The following consequence records how far below the
square-root threshold this conclusion remains nontrivial.

\begin{corollary}\label{cor:near-limit-density}
Fix \(C\ge0\).  There are constants \(\kappa=\kappa(C)>0\),
\(c=c(C)>0\), and \(N_0=N_0(C)\) such that the following holds.  If
\(N\ge N_0\), and \(A\subseteq [N]\) is a set such that
\[
        |A_p|\le \frac p2+C,
\]
for every prime $p\le\frac14\log N$, and
\[
        |A|\ge
        \sqrt N\exp\left(
            -\kappa\frac{\sqrt{\log N}}{\log\log N}
        \right),
\]
then there is an integral quadratic \(q(x)=m+x^2\) or \(q(x)=m-x^2\) such that
\[
        |A\cap q(\Z)|
        \ge
        \exp\left(
            c\frac{\sqrt{\log N}}{\log\log N}
        \right).
\]
\end{corollary}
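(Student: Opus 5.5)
The plan is to deduce Corollary~\ref{cor:near-limit-density} directly from Theorem~\ref{thm:density} by choosing $\kappa$ small relative to the constant $c$ of that theorem. Let $c_0=c_0(C)>0$ and $N_0'=N_0'(C)$ be the constants furnished by Theorem~\ref{thm:density}. Let $\beta=\beta(C)>0$ be the implied constant in its $\gg_C$. Then for every nonempty $A\subseteq[N]$ obeying the residue condition for $p\le\frac14\log N$, there is a quadratic $q(x)=m\pm x^2$ with
\[
|A\cap q(\Z)|\ge \beta\frac{|A|}{\sqrt N}\exp\left(c_0\frac{\sqrt{\log N}}{\log\log N}\right).
\]
The hypothesis of the corollary certainly makes $A$ nonempty, since the lower bound on $|A|$ is positive. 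So the theorem applies as soon as $N\ge N_0'$.

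Now set $\kappa=c_0/3$ and $c=c_0/3$. Write $L=\sqrt{\log N}/\log\log N$ and insert the assumed bound $|A|\ge\sqrt N\exp(-\kappa L)$. The right-hand side above is then at least $\beta\exp\bigl((c_0-\kappa)L\bigr)=\beta\exp(2c_0L/3)$. We want this to be at least $\exp(c_0L/3)$, which is equivalent to $\exp(c_0L/3)\ge1/\beta$. Since $L\to\infty$ as $N\to\infty$, this holds for all $N$ beyond a threshold depending only on $C$, through $c_0$ and $\beta$. We take $N_0(C)$ to be the maximum of $N_0'$ and this threshold. The resulting $q$ has the form $m+x^2$ or $m-x^2$, as the corollary requires.

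There is no genuine obstacle here. The only point needing care is to split the exponent so that both $\kappa$ and $c$ are strictly less than $c_0$. The leftover slack $\exp(c_0L/3)$ then absorbs the unspecified multiplicative constant $\beta$ hidden in $\gg_C$. This is exactly why $N_0$ must depend on $C$.
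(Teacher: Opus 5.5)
Your proposal is correct and matches the paper's approach: the paper obtains the corollary directly from Theorem~\ref{thm:density} by choosing $\kappa=\kappa(C)$ small compared with that theorem's exponent constant. Splitting the exponent as you do leaves slack $\exp(c_0L/3)$, which absorbs the implied constant once $N$ is large in terms of $C$.
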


This includes, in particular, the range \(|A|\ge \sqrt N/(\log N)^k\) for every
fixed \(k\).  Theorem~\ref{thm:main} follows from Theorem~\ref{thm:density} when
$|A|\ge\alpha\sqrt N$, while
Corollary~\ref{cor:near-limit-density} follows from
Theorem~\ref{thm:density} after choosing \(\kappa=\kappa(C)>0\) sufficiently
small.

Our proof is inspired by the weighted entropy method of
Croot--Mao--Pohoata--Sheffer--Yip
\cite[Section~4]{CrootMaoPohoataShefferYip2026}. Here the local weights
are mixtures of quadratic root-counting functions, with the shifts biased
towards favorable character correlations and mixed by the uniform
measure. The logarithm makes the local gains additive without requiring
independence of the residue distributions.

In fact, the proof of Theorem~\ref{thm:density} needs the local restriction
only at primes $p\equiv3\pmod4$ in one interval of size comparable to
$\log N$. Proposition~\ref{prop:variance-amplification} gives a more general
bound in terms of the exact sizes $|A_p|$. In particular,
Corollary~\ref{cor:square-root-deficit} treats the weaker condition
$|A_p|\le p-b\sqrt p$, with exponent
$c(\log N)^{1/4}/\log\log N$. 

The preceding results for the quadratic inverse large sieve are essentially sharp when the local restriction is imposed only for primes up to order $\log N$. The quadratic inverse sieve conjecture, however, assumes the same restriction for primes up to $\sqrt N$, suggesting that substantially larger quadratic intersections should be possible. As a first step in this direction, we show in the following theorem that imposing the local restriction up to $(\log N)^3$ already yields a stronger lower bound than Theorem~\ref{thm:density}. Our proof uses a truncated quadratic weight inspired by the Selberg sieve.

\begin{theorem}\label{thm:largerrange}
There are absolute constants $c,C>0$ and $N_0$ such that the
following holds. Let $N\ge N_0$, and let $A\subseteq[N]$ be
nonempty. Suppose
\[
 |A_p|<\frac{p+1}{2}
 \qquad\text{for every odd prime }p\leq (\log N)^3.
\]
Then there is an integral quadratic $q(x) = m\pm x^2$ such that
\[
 \left|A\cap q(\Z)\right|
 \ge C\frac{|A|}{\sqrt N}
 \exp\left(c\sqrt{\frac{\log N}{\log\log N}}\right).
\]
\end{theorem}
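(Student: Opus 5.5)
We prove Theorem~\ref{thm:largerrange} by a weighted double counting. We compare $\sum_{a\in A}w(a)$ with the sum of the same weight over all of $[N]$, where $w$ is built from quadratic root-counting functions in the style of a Selberg sieve.

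\textbf{Step 1: local structure.} Fix an odd prime $p\le(\log N)^3$. Since $|A_p|\le p/2$, the set $A_p$ omits at least half of the residues. Let $\chi_p$ be the Legendre symbol. We want a shift $m_p$ for which $A_p$ correlates with the image of $m_p\pm x^2$, that is,
\[
\sum_{a\in A}\chi_p(\pm(a-m_p))\ \ge\ \eta_p|A|
\]
for some sign, with $\eta_p\gg p^{-1/2}$ on average over $m_p$. For this we average over shifts $m$: the identity $\sum_m\bigl|\sum_{a}\chi_p(a-m)\bigr|^2=\sum_{a,a'}\sum_m\chi_p(a-m)\chi_p(a'-m)$ is controlled by $p$ times the collision count minus the off-diagonal terms. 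Because the mass of $A$ is concentrated on at most $(p+1)/2$ classes, the off-diagonal sum is biased, and this forces a second moment $\gg |A|^2/p$ times a constant. This gives the "favorable character correlation" for each prime.

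\textbf{Step 2: Selberg-type weight.} Let $r_m(n)=\#\{x:\ m-n\equiv x^2\}$ modulo $d$ (or the version with the opposite sign). This is multiplicative in squarefree $d$, with $r_m(n)\bmod p=1+\chi_p(m-n)$. Use the CRT to choose a global shift $m$ with $m\equiv m_p\pmod p$, and the sign uniformly (here one pigeonholes on the sign, handled via the $\pm$ alternative). Define the truncated weight
\[
w(n)=\Bigl(\sum_{d\mid P,\ d\le D}\lambda_d\prod_{p\mid d}\chi_p(m-n)\Bigr)^2,
\]
where $P=\prod_{p\le z}p$ runs over primes with $z=(\log N)^3$, and $D=N^{1/2-\epsilon}$ so that the moduli $d_1d_2$ stay below $N$. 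This makes the sum over $n\in[N]$ a main term $N\sum\lambda_{d}^2$ plus an error of size $O(D^2)$ from incomplete character sums. The error is bounded by Pólya--Vinogradov for characters of conductor $[d_1,d_2]/(d_1,d_2)$. The $\lambda_d$ are chosen to maximise the ratio of $\sum_{a\in A}w(a)$ to $\sum_{n\le N}w(n)$. By Step 1, $\sum_a\prod_{p\mid d}\chi_p(m-a)\approx|A|\prod_{p\mid d}\eta_p$ heuristically, so the optimal choice $\lambda_d\propto\prod_{p\mid d}\eta_p$ gives a ratio of about $\frac{|A|}{N}G$ with $G=\sum_{d\le D,\,d\mid P}\prod_{p\mid d}\eta_p^2$. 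Since $\eta_p^2\gg 1/p$, we get $G\ge\sum_{d\le D}\mu^2(d)\prod_{p\mid d}(c/p)$ restricted to $z$-smooth $d$. Using $k$ primes of size about $(\log N)^3$ with $k\approx\sqrt{\log N/\log\log N}$, this count is $\approx\binom{\pi(z)}{k}(c/z)^k$, which is $\exp\bigl(c'\sqrt{\log N/\log\log N}\bigr)$ after optimisation, subject to the constraint that $d\le D$. The balance between the size of $z$ and $k\log z\le \tfrac12\log N$ produces exactly the exponent $\sqrt{\log N/\log\log N}$.

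\textbf{Step 3: conclusion.} Expanding $w$ as a combination of root counts $\prod_{p\mid d}r(\cdot)$, the weighted count $\sum_a w(a)$ is a sum over $a\in A$ and $x\le\sqrt N$ of congruence conditions $a\equiv m\pm x^2$. Lifting the congruences by the CRT and using $\sum_n w(n)\approx N\cdot\mathrm{main}$, the excess $\sum_a w(a)\gg \frac{|A|}{N}G\sum_n w(n)$ is converted by pigeonhole into a single integer $m'$ with $|A\cap(m'\pm\Z^2)|\gg\frac{|A|}{\sqrt N}G$.

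\textbf{Main obstacle.} The hard part is Step 2's lower bound for $\sum_{a\in A}w(a)$. The correlations at different primes are not independent for a single $A$, so the heuristic product $\prod_{p\mid d}\eta_p$ is not justified. My first attempt would be to avoid needing independence through a Cauchy--Schwarz/positivity argument: choose $\lambda_d$ adaptively, greedily prime by prime, treating the mixed distribution over shifts as in the entropy method so that gains become multiplicative in expectation. The other delicate point is keeping the Pólya--Vinogradov error below the main term, which is what forces $z\le(\log N)^3$.
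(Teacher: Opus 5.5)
Your proposal has genuine gaps, and the most serious one remains even if your independence heuristic is granted.

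First, as you acknowledge, the lower bound for $\sum_{a\in A}w(a)$ is unjustified. With a single CRT shift $m$, the sum $\sum_{a\in A}\prod_{p\mid d}\chi_p(m-a)$ is a correlation of $A$ with a product of characters, and it does not factor. ``Choosing $\lambda_d$ greedily prime by prime'' is not an argument.

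Second, the numerology of Step~2 is wrong. The Selberg/Cauchy--Schwarz optimum $\lambda_d\propto\eta_d=\prod_{p\mid d}\eta_p$ gives the ratio $G=\sum_d\eta_d^2\le\prod_{p\le z}(1+\eta_p^2)$. From the only available information, $\eta_p^2\gg1/p$, you get at best $\prod_{p\le z}(1+c/p)\asymp(\log z)^{c}=(\log\log N)^{O(1)}$. Your count $\binom{\pi(z)}{k}(c/z)^k$ is at most $(c\,\pi(z)/z)^k/k!<1$, not $\exp(c'\sqrt{\log N/\log\log N})$. A first-moment (linear-form) optimisation cannot give an exponential gain from primes near $(\log N)^3$, where $\sum 1/p$ is bounded.

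Third, Step~3 fails as written. Since $\chi_p(m-n)$ is a root count minus $1$, expanding $w$ into root counts produces signed coefficients. So a bias of $A$ toward large values of $w$ does not, by pigeonhole, give a large intersection with some $m'\pm\Z^2$.

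The paper handles all three points by probing a nonnegative kernel along quadratic orbits:
$\sum_{a\in A}\sum_{x\le\sqrt N}K(a-\varepsilon x^2)\le\max_{m}|A\cap(m+\varepsilon\Z^2)|\sum_{m=-N}^{2N}K(m)$.
Here $K=R_D^2$, $R=\prod_{p}(1+t_pg_p)$, $g_p=\varepsilon f_p$, and $f_p(s)=c_p\sum_{y\in A_p}\chi_p(y-s)$.

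\begin{itemize}
\item The weight $f_p$ superposes \emph{all} shifts in $A_p$, not one favourable shift. For a fixed $m_p$, the orbit average $\E_x\chi_p(m_p-a+\varepsilon x^2)$ is $O(1/p)$ unless $a\equiv m_p$.
\item For $p\equiv3\pmod4$ and \emph{every} $a\in A$, one has exactly $\E_xg_p(a-\varepsilon x^2)=\kappa_p$ and $\E_xg_p(a-\varepsilon x^2)^2=1+\varepsilon v_p(a)$.
\item For fixed $a$, $x$ is uniform modulo $Q$, so the Chinese remainder theorem factorises $F(a)=\prod_p\bigl(1+2t_p\kappa_p+t_p^2(1+\varepsilon v_p(a))\bigr)$, to be compared with $C_0=\prod_p(1+t_p^2)$.
\item Choosing $\varepsilon$ by the sign of the mean of $v_p$ and applying Jensen over $a$ gives the gain $e^{G}$ with $G=\sum_pt_p\kappa_p$. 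No independence across primes is needed.
\end{itemize}

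Because the $t_p^2$ terms cancel rather than being lost to Cauchy--Schwarz, one may take $t_p=\eta U/(p\kappa_p)$, much larger than $\eta_p$. This gives $G\asymp U$ over $p\in(U^3,U^6]$.

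The exponent $U=\sqrt{\log N/\log\log N}$ comes from Rankin's trick for the truncation $d\le D=N^{1/20}$. That step requires $\sum_pt_p^2\asymp\eta^2U^2$ to stay below $\log D/\log X\asymp U^2$. The incomplete sums are then handled by trivial periodicity with period at most $D^2$, not by P\'olya--Vinogradov.
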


Green and Harper proved a partial
``robustness'' result for the quadratic inverse large sieve problem
\cite[Theorem~1.4]{GreenHarper2014}.  Roughly speaking, their theorem says
that, under the local hypothesis
\[
        |A_p|+|B_p|\le p+1,
\]
if \(A\) and \(B\) are close to the large-sieve scale and have large
intersections with two prescribed rational quadratics of controlled height,
then \(A\) and \(B\) must in fact be mostly contained in those quadratics. This led them to formulate the following corresponding conjecture for two sets. A rational quadratic has height at most $H$ if it can be written
as $(ax^2+bx+c)/d$, where $a,b,c,d\in\Z$, $ad\ne0$, and
$\max(|a|,|b|,|c|,|d|)\le H$.

\begin{conjecture}[Green--Harper]\label{conj:GH15}
Let \(X_0\in\mathbb N\) and \(\varrho>0\).  Let \(X\) be sufficiently large in
terms of \(X_0\) and \(\varrho\).  Suppose that \(A,B\subseteq [X]\) and that
\[
        |A_p|+|B_p|\le p+1
\]
for all primes \(p\in [X_0,X^{1/4}]\).  Then there is a constant
\(c=c(\varrho)>0\) such that one of the following alternatives holds:
\begin{enumerate}
\item either $|A\cap [X^{1/2}]|$ or $|B\cap [X^{1/2}]|$ is at most $X^{1/4-c}$;
\item there are rational quadratics \(\psi_A,\psi_B\) of height at most
      \(X^{\varrho}\) such that
      \[
        |A\setminus \psi_A(\mathbb Q)|\le X^{1/2-c},
        \qquad
        |B\setminus \psi_B(\mathbb Q)|\le X^{1/2-c}.
      \]
\end{enumerate}
\end{conjecture}

An important motivation for Conjecture~\ref{conj:GH15} is its connection with Ostmann's
inverse Goldbach problem \cite{ostmann}. Let \(\mathscr P\) denote the set of all primes, and for two
sets \(S,T\) write \(S\sim T\) if their symmetric difference is finite.

\begin{conjecture}[Inverse Goldbach problem]\label{conj:inverse-goldbach}
There do not exist sets \(\mathcal A,\mathcal B\) of positive integers, each
with at least two elements, such that
\[
        \mathscr P\sim \mathcal A+\mathcal B.
\]
\end{conjecture}

This conjecture has a long history; see for instance the work of Hornfeck \cite{Hornfeck1954},  Elsholtz \cite{Elsholtz2001}, Elsholtz--Harper \cite{ElsholtzHarper2015}, and Shao
\cite{Shao2016}. Green and Harper \cite[Theorem~1.8]{GreenHarper2014} proved that Conjecture~\ref{conj:GH15} implies
Conjecture~\ref{conj:inverse-goldbach}. The underlying reason is that two large quadratic patterns cannot have a
sumset consisting only of primes, apart from finitely many exceptions.

Our paired result forces large intersections with quadratics, and also
gives a stronger lower bound for the product of the two intersection sizes.

\begin{theorem}\label{thm:twoset-density}
Fix $D,C\ge0$. There are constants $c=c(D,C)>0$ and $N_0=N_0(D,C)$
such that the following holds. Let $N\ge N_0$, and let $A,B\subseteq[N]$
be nonempty with $|A|,|B|\le\sqrt N$. Put
\[
        L=\max\left(\frac{\sqrt N}{|A|},\frac{\sqrt N}{|B|}\right)
        \le(\log N)^D,
\]
and suppose that
\[
        |A_p|+|B_p|\le p+C
        \qquad\bigl(p\le\min(|A|,|B|)\bigr).
\]
Then there are integral quadratics $q_E(x)=m_E+\varepsilon_E x^2$,
$\varepsilon_E\in\{+1,-1\}$, for $E=A,B$, such that
\[
        |E\cap q_E(\Z)|\gg_{D,C}\frac{|E|}{\sqrt N}
        \exp\left(c\frac{\sqrt{\log N}}
        {\log\log N\,\sqrt{1+\log L}}\right)
        \qquad(E=A,B),
\]
and, for the same two quadratics,
\[
        |A\cap q_A(\Z)|\,|B\cap q_B(\Z)|
        \gg_{D,C}\frac{|A||B|}{N}
        \exp\left(c\frac{\sqrt{\log N}}{\log\log N}\right).
\]
\end{theorem}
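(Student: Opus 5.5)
We will run the weighted entropy argument behind Theorem~\ref{thm:density} simultaneously for $A$ and $B$, with the deficits of the two sets tied together by the paired hypothesis $|A_p|+|B_p|\le p+C$. We restrict to primes $p\equiv 3\pmod 4$ in a dyadic interval $P<p\le 2P$ with $P\asymp\log N$. Since $\min(|A|,|B|)\ge\sqrt N/L\ge \sqrt N(\log N)^{-D}$, every such prime satisfies $p\le\min(|A|,|B|)$, so the hypothesis applies there. For each such $p$ we write $|A_p|=p/2+a_p$ and $|B_p|=p/2+b_p$, so that $a_p+b_p\le C$. We then apply Proposition~\ref{prop:variance-amplification}, which bounds the intersection with some $m\pm x^2$ in terms of the exact sizes $|E_p|$, separately to $E=A$ and $E=B$. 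We expect it to give
\[
\log\frac{|E\cap q_E(\Z)|\sqrt N}{|E|}\ \ge\ c\,\frac{\sqrt{\log N}}{\log\log N}\cdot\Phi\bigl((e_p)_p\bigr)-O(1),
\]
where $e_p\in\{a_p,b_p\}$ and $\Phi$ measures the averaged local gain. For a single set, $\Phi$ is bounded below when $e_p\le C$. In general, the gain at $p$ is roughly $\min(1,\,(p/2-e_p)/p)$ relative to the uniform measure.

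\textbf{Step 1 (product bound).} We add the entropy inequalities for $A$ and $B$. The key point is that the quadratic weight for $A$ at $p$ gains exactly where $|A_p|$ is small. If $|A_p|$ is large, say close to $p$, then $|B_p|$ is forced to be at most about $C$, and the $B$-weight gains substantially at that $p$ (by about $\log(p/|B_p|)$, capped by the mixing). So the combined gain $g_A(p)+g_B(p)$ is bounded below by an absolute constant for every $p$ in the range. Summing over primes and optimizing the mixing parameter as in the proof of Theorem~\ref{thm:density} then gives the product bound, with the full exponent $\sqrt{\log N}/\log\log N$. One subtlety is that the entropy argument also pays a size cost of $\log(\sqrt N/|E|)$, since $|E|$ may fall below $\sqrt N$. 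For the product this cost is $\log(N/|A||B|)$, and it is absorbed by the normalization $|A||B|/N$ on the right-hand side.

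\textbf{Step 2 (individual bounds).} Here the difficulty is that $A$ alone may have $a_p$ large on many primes, with $B$ compensating. We use the large sieve or the larger sieve as a constraint. Since $|A|\ge\sqrt N/L$, Gallagher's larger sieve applied to $A$ at primes near $P$ prevents $A$ from being too concentrated in residue classes. Quantitatively, the excess $\sum_p a_p/p$ of $A$ over the half-residue threshold, weighted appropriately, is at most $O(\log L)$ per scale. Feeding this into the variance amplification, the gain for $A$ survives on a set of primes whose weighted mass is $\gg 1/(1+\log L)$ of the total. With a quadratic dependence of the exponent on the deficit, as in Corollary~\ref{cor:square-root-deficit}, where the gain enters under a square root, this yields the factor $1/\sqrt{1+\log L}$.

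\textbf{Consistency of $q_A,q_B$.} We must ensure the same two quadratics serve both conclusions. To do this we select $q_A$ and $q_B$ as the maximizers of the respective weighted counts produced by the single combined entropy argument, and deduce both the individual and the product bounds from that one choice.

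\textbf{Main obstacle.} We expect Step 2 to be the hard part: converting the larger-sieve information at scale $|A|$ into a lower bound on the $A$-gain with precisely the $\sqrt{1+\log L}$ loss, rather than a worse loss. Our first attempt would be to interpolate the entropy inequality, using weight $t$ on $A$ and $1-t$ on $B$, and optimize $t$ in terms of $L$.
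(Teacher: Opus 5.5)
Your Step 1 is sound and is essentially the paper's argument. Since $|A_p|+|B_p|\le p+C$, at each prime of the window one of the two sets has $|E_p|\le p/2+C/2$. So the exponents from Proposition~\ref{prop:variance-amplification} for $A$ and for $B$ add up to $\gg\sum_p p^{-1/2}$. The paper instead splits the primes into $\mathcal Q_A,\mathcal Q_B$ and applies Corollary~\ref{prop:restricted-density} to each, which amounts to the same thing. Consistency is also harmless: for each $E$, take the quadratic maximizing $|E\cap q(\Z)|$.

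The gap is in Step 2. The larger sieve applied to $A$ gives lower bounds on $|A_p|$. But $A$'s gain requires $A$ to \emph{omit} classes, i.e.\ upper bounds on $|A_p|$, and these can only come from lower bounds on $|B_p|$ via the paired hypothesis. Even $B$'s larger sieve alone, $\sum_{p\le M}\log p/|B_p|\le\log N+O(1)$ with $M=\min(|A|,|B|)$, cannot be localized to a logarithmic window. You would have to subtract the main term $\sum_{p\le M}2\log p/p=\log N-2\log L+O(1)$, and the summands $1/|B_p|-2/p$ may be negative at other primes. Nothing in $B$'s own inequality prevents $|B_p|=p$ there; only $A$'s larger sieve, through the paired hypothesis, does.

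The missing idea is to add the two larger-sieve inequalities and use $1/u+1/v\ge4/(u+v)$ together with $|A_p|+|B_p|\le p+C$. The defects $d_p=1/|A_p|+1/|B_p|-4/(p+C)$ are then nonnegative for every $p\le M$, and
\[
\sum_{p\le M}d_p\log p\le2\log N-4\log M+O_C(1)\ll_C 1+\log L.
\]
Nonnegativity is what permits localization. A window prime with $\min(|A_p|,|B_p|)<p/R$ has $d_p\ge(R-4)/p$, so for $R=J(1+\log L)$ with $J$ large, these primes carry only a small part of $\sum p^{-1/2}$.

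Your account of where $\sqrt{1+\log L}$ comes from is also wrong, for three reasons.
\begin{enumerate}
\item The weighted excess $\sum_p a_p/p$ is the wrong quantity. With weights $\log p/p$ it is trivially $O(1)$ on a dyadic window, so bounding it by $O(\log L)$ says nothing.
\item Nothing in the available inequalities excludes $|B_p|\asymp p/(1+\log L)$, and hence $p-|A_p|\asymp p/(1+\log L)$, at every prime of the window. In that case $A$ has the full gain $\asymp p^{-1/2}$ at no prime at all.
\item If full gain survived only on a $1/(1+\log L)$ fraction of the mass, the exponent, which is linear in the sum over primes, would lose the factor $1/(1+\log L)$, not its square root.
\end{enumerate}
What actually happens is this. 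On the good primes, which carry almost all the mass, both sets satisfy $|E_p|\le p-p/R+C\le(1-1/(2R))p$. Hence the per-prime gain $\sqrt{(p-s_p)/(ps_p)}$ in Proposition~\ref{prop:variance-amplification} is at least $(2Rp)^{-1/2}$. The gain scales like the square root of the omitted proportion, not linearly as in your normalization of $\Phi$. That square-root loss, applied uniformly across the window, is the source of $\sqrt{1+\log L}$. Interpolating the $A$- and $B$-inequalities with weights $t,1-t$ cannot replace this, because the individual bound for $A$ must come from classes that $A$ itself omits.
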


Combining this with the lower bounds of Elsholtz and Harper for the
counting functions of hypothetical summands of the primes gives the
following consequence.

\begin{corollary}\label{cor:goldbach-quadratic}
Suppose that \(\mathscr P\sim\mathcal A+\mathcal B\), where
$\mathcal A,\mathcal B\subseteq\mathbb N$ each contain at least two elements. Then
there is a constant \(c>0\) such that, for all sufficiently large \(N\), there
are integral quadratics
\[
        q_{\mathcal A}(x)=m_{\mathcal A}\pm x^2,
        \qquad
        q_{\mathcal B}(x)=m_{\mathcal B}\pm x^2
\]
for which
\[
        |\mathcal A\cap [N]\cap q_{\mathcal A}(\Z)|,\ 
        |\mathcal B\cap [N]\cap q_{\mathcal B}(\Z)|
        \ge
        \exp\left(
            c\frac{\sqrt{\log N}}{(\log\log N)^{3/2}}
        \right),
\]
and
\[
        |\mathcal A\cap [N]\cap q_{\mathcal A}(\Z)| \cdot
        |\mathcal B\cap [N]\cap q_{\mathcal B}(\Z)|
        \ge
        \exp\left(
            c\frac{\sqrt{\log N}}{\log\log N}
        \right).
\]
\end{corollary}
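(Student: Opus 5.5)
The plan is to derive Corollary~\ref{cor:goldbach-quadratic} from Theorem~\ref{thm:twoset-density}, applied to $A=\mathcal A\cap[N']$ and $B=\mathcal B\cap[N']$ at a suitable scale $N'$ comparable to $N$. Three inputs are needed. First, the local condition $|A_p|+|B_p|\le p+C$. Second, the size conditions $|A|,|B|\le\sqrt{N'}$ together with $L\le(\log N')^D$, which come from Elsholtz--Harper. Third, translating the conclusion back to the sets $\mathcal A\cap[N]$.

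\textbf{Local condition.} Since $\mathscr P\sim\mathcal A+\mathcal B$, all but finitely many sums $a+b$ are prime. Fix a prime $p$ and suppose some residue $r\in\mathcal A_p$ and $s\in\mathcal B_p$ satisfy $r+s\equiv 0\pmod p$. If $r$ is hit by infinitely many $a\in\mathcal A$ and $s$ by infinitely many $b\in\mathcal B$, we get infinitely many sums $a+b>p$ divisible by $p$, a contradiction.

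To make this finite-scale and uniform, I would use the standard reduction: discard finitely many elements, so that every $a+b$ with $a\in\mathcal A$, $b\in\mathcal B$ beyond a threshold is prime. For any given $b$, the finitely many exceptional pairs involving $b$ can be handled by working with $A,B$ restricted to elements exceeding a constant $M_0$, where $M_0$ is fixed in terms of the exceptional set. Then, for a prime $p$, the sets $A_p$ and $-B_p$ are disjoint. The only possible exception is a pair with $a+b=p$ itself, which forces $a,b<p$. The number of such residues is $O(1)$ per element, but there are many elements below $p$, so this needs care.

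My remedy is to instead restrict to $A=\mathcal A\cap(M,N']$, $B=\mathcal B\cap(M,N']$ with elements larger than every prime considered. Since the theorem only needs $p\le\min(|A|,|B|)\approx\sqrt N$, one can take elements in $(\sqrt N,N]$: then $a+b>p$, so $p\mid a+b$ is impossible. This gives $|A_p|+|B_p|\le p$, using also that $0\notin A_p$ or the pairing $r\leftrightarrow -r$ injects. Removing the elements below $\sqrt N$ costs only $O(N^{1/4+o(1)})$ from each counting function, which is negligible.

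\textbf{Sizes.} Elsholtz--Harper give $\sqrt N/(\log N)^{D}\ll|\mathcal A\cap[N]|,|\mathcal B\cap[N]|\ll \sqrt N(\log N)^{D}$ for some absolute $D$ (the upper bound follows from the lower bound for the other set together with $|\mathcal A\cap[N]||\mathcal B\cap[N]|\ll N/\log N$-type counting). If a set exceeds $\sqrt{N}$, I would pass to a subset of size exactly $\lfloor\sqrt N\rfloor$. The local condition is preserved under subsets, and $L$ stays $\le(\log N)^{D}$. Theorem~\ref{thm:twoset-density} then gives
\[
|E\cap q_E(\Z)|\gg \frac{|E|}{\sqrt N}\exp\!\Bigl(c\frac{\sqrt{\log N}}{\log\log N\sqrt{1+D\log\log N}}\Bigr).
\]
Since $|E|/\sqrt N\ge(\log N)^{-D}$, this is at least $\exp(c'\sqrt{\log N}/(\log\log N)^{3/2})$. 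The product bound similarly loses only $(\log N)^{-2D}$, which is absorbed by shrinking $c$.

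\textbf{Main obstacle.} The delicate step is the local condition. It requires checking that the finitely many exceptions in $\mathscr P\sim\mathcal A+\mathcal B$, and sums equal to $p$ itself, do not spoil $|A_p|+|B_p|\le p+C$ uniformly for all $p\le\min(|A|,|B|)$. Restricting to large elements, as above, is what I would try first.
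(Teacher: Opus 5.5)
Your proposal is correct and follows essentially the paper's route. You discard small elements so that every sum $a+b$ exceeds both the threshold beyond which $\mathcal A+\mathcal B$ consists of primes and every prime $p\le\min(|A|,|B|)$; this makes $A_p$ and $-B_p$ disjoint, so $|A_p|+|B_p|\le p$. You then apply Theorem~\ref{thm:twoset-density} with $C=0$ and fixed $D$, and absorb the factors $|E|/\sqrt N\ge(\log N)^{-D}$ into the exponents.

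The only real difference is where you cut. Cutting at $\sqrt N$ requires an upper bound $|\mathcal A\cap[\sqrt N]|\ll N^{1/4}(\log N)^{O(1)}$. That bound is true, via Elsholtz's large-sieve bounds, but it does not follow from simply counting prime sums as your parenthetical suggests. The paper avoids this extra input: it cuts at $Y=\lfloor\frac{c_*}{4}\sqrt N/(\log N\log\log N)\rfloor$ and takes $|A_N|=|B_N|=Y$. Then only primes $p\le Y$ are involved, and the trivial bound $|\mathcal A\cap[Y]|\le Y$ together with the Elsholtz--Harper lower bound suffices.
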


The quadratics in these finite statements may depend on $N$.
In particular, the conclusions do not assert a growing intersection with
one fixed quadratic as $N$ varies.

\medskip
\noindent\textbf{Notation.}
All logarithms are natural. The letter $p$ always denotes a prime, and
$\F_p$ is the field with $p$ elements. We write $X\ll Y$, or $Y\gg X$,
when $|X|\le K Y$ for an absolute constant $K$; subscripts record allowed
dependencies. For a polynomial $q$ and a set $S$, write
$q(S)=\{q(s):s\in S\}$, and put $\Z^2=\{x^2:x\in\Z\}$.
For nonempty $E\subseteq[N]$, its residue distribution modulo $p$ is
\[
        \nu_{E,p}(y)=\frac{|\{a\in E:a\equiv y\pmod p\}|}{|E|}.
\]
Empty products and sums are interpreted as $1$ and $0$, respectively.

\medskip
\noindent\textbf{Organization.}
Section~\ref{sec:heuristic} outlines the method used to prove Theorem \ref{thm:density}. Sections~\ref{sec:local} and~\ref{sec:product} establish the local estimate and its global amplification, and Section~\ref{sec:oneset} completes the proof of Theorem \ref{thm:density}. In Section~\ref{sec:largerrange} we prove Theorem~\ref{thm:largerrange}. The larger-sieve estimate in Section~\ref{sec:second-moment} supplies the
additional input for the paired theorem and Goldbach application in
Section~\ref{sec:twoset}. Section~\ref{sec:upper} gives the matching
construction, with a uniform intersection bound for all integral quadratics.

\section{Outline of Proof of Theorem \ref{thm:density}}\label{sec:heuristic}

Let $\Delta$ be a product of distinct primes $p\equiv3\pmod4$.
For a sign $\varepsilon\in\{+1,-1\}$ and a shift $s\in\F_p$, the kernel
\[
        K_{p,\varepsilon,s}(y)=1+\varepsilon\chi_p(y-s)
\]
counts the solutions of $y=s+\varepsilon x^2$ in $\F_p$, since
$\chi_p(-1)=-1$. The product of these kernels counts roots modulo
$\Delta$ by the Chinese remainder theorem. Our goal is to find shifts
for which this root count has a large average over $A$.

For the residue distribution $\nu_p$ of $A$, define
\[
        \theta_p(s)=\sum_y\nu_p(y)\chi_p(y-s).
\]
The character-correlation identity gives
\[
        \sum_s\theta_p(s)^2=p\sum_y\nu_p(y)^2-1=:W_p.
\]
Under the half-density hypothesis, Cauchy--Schwarz gives $W_p\gg1$.
This suggests a local gain of order $p^{-1/2}$. Choosing a favorable shift
separately at each prime is not sufficient, because the residue
distributions of $A$ need not be independent.

Instead, we choose a probability distribution $\lambda_p$ on the shifts
and average the kernels:
\[
        Z_p(y)=\sum_s\lambda_p(s)K_{p,\varepsilon,s}(y).
\]
The key local proposition constructs $Z_p\ge1/2$ with
\[
        \sum_y\nu_p(y)\log Z_p(y)
        \gg\min\{W_p,\sqrt{W_p/p}\}.
\]
After retaining the primes with one common sign, Jensen's inequality gives
\[
        \frac1{|A|}\sum_{a\in A}\prod_{p\mid\Delta}Z_p(a)
        \ge\exp\left(\sum_{p\mid\Delta}
                         \sum_y\nu_p(y)\log Z_p(y)\right).
\]
This uses only the individual residue distributions. Since the product of
the $Z_p$ is an average of the root-counting kernels, some deterministic
choice of shifts attains this lower bound.

The logarithmic average has an entropy interpretation. If $u_p$ is the
uniform measure, then $Z_pu_p$ is a probability measure, and
\[
        \sum_y\nu_p(y)\log Z_p(y)
        =D(\nu_p\Vert u_p)-D(\nu_p\Vert Z_pu_p),
\]
where $D(\mu\Vert\sigma)=\sum_y\mu(y)\log(\mu(y)/\sigma(y))$,
with zero summands interpreted as zero. Thus the weight improves on the
uniform measure in relative entropy.

The lifting step converts a modular gain $G$ into a quadratic intersection
of size $\gg |A|G/\sqrt N$, provided $\Delta\le\sqrt N$.
Taking $p\asymp\log N$ keeps the product modulus within this range and gives
\[
        \sum_{p\mid\Delta}p^{-1/2}
        \asymp\frac{\sqrt{\log N}}{\log\log N}.
\]
For the paired theorem, a larger-sieve argument first finds many primes
at which neither set occupies too few classes. The bound on
$|A_p|+|B_p|$ then forces each set to omit enough classes for the same
local estimate to apply.

\section{Local quadratic-character weights}\label{sec:local}

We first convert a quadratic-character variance into a positive logarithmic
average. This step works for every odd prime; the restriction
$p\equiv3\pmod4$ enters only when we combine the local weights.

Let $p$ be an odd prime, and let $\chi=\chi_p$ be its quadratic character,
extended by $\chi(0)=0$. Write $t_+=\max\{t,0\}$ and let
$\E_{\unif}$ denote averaging over $\F_p$.

\begin{proposition}\label{prop:local-entropy}
Let $\nu$ be a probability measure on $\F_p$, and put
\[
        W(\nu)=p\sum_{y\in\F_p}\nu(y)^2-1.
\]
If $W(\nu)>0$, there are a sign $\varepsilon\in\{+1,-1\}$ and a
probability measure $\lambda$ on $\F_p$ such that
\[
        Z(y)=\sum_{s\in\F_p}\lambda(s)
                   \bigl(1+\varepsilon\chi(y-s)\bigr)
\]
satisfies $Z(y)\ge1/2$ for every $y\in\F_p$ and
\[
        \sum_{y\in\F_p}\nu(y)\log Z(y)
        \ge \frac{1}{32\sqrt2}
        \min\left\{W(\nu),\sqrt{\frac{W(\nu)}p}\right\}.
\]
\end{proposition}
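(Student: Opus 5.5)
The plan is to perturb the uniform shift distribution in the direction of the correlations $\theta(s)=\sum_y\nu(y)\chi(y-s)$ and expand the logarithm to second order.

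\textbf{Two identities.} First, $\sum_s\theta(s)=\sum_y\nu(y)\sum_s\chi(y-s)=0$. Second, the character-correlation identity gives $\sum_s\theta(s)^2=W:=W(\nu)$, using $\sum_y\chi(y-s)\chi(y-t)=p\,1_{s=t}-1$.

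\textbf{Choice of sign and measure.} Since $\theta$ has mean zero, one of $\sum_s\theta_+^2$ and $\sum_s\theta_-^2$ is at least $W/2$. I choose $\varepsilon$ accordingly; say $\varepsilon=+1$, the other case being symmetric. Let $\mu(s)=\theta_+(s)/\sum_t\theta_+(t)$, and set
\[
\lambda=(1-\eta)\,\unif+\eta\,\mu,\qquad 0<\eta\le\tfrac12 .
\]
The uniform part contributes nothing to $\sum_s\lambda(s)\chi(y-s)$. Hence
\[
Z(y)=1+\eta f(y),\qquad f(y)=\sum_s\mu(s)\chi(y-s).
\]
Since $|f|\le1$, we get $Z\ge1/2$ automatically. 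Using $\log(1+x)\ge x-x^2$ for $|x|\le1/2$,
\[
\sum_y\nu(y)\log Z(y)\ \ge\ \eta G-\eta^2\sum_y\nu(y)f(y)^2,
\qquad G:=\sum_y\nu f=\sum_s\mu(s)\theta(s)=\frac{\sum_s\theta_+^2}{\sum_s\theta_+}.
\]

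\textbf{Bounding the linear term.} By Cauchy--Schwarz on the support, $\sum_s\theta_+\le\sqrt{p\sum_s\theta_+^2}$. This gives
\[
G\ge\sqrt{\tfrac{\sum_s\theta_+^2}{p}}\ge\sqrt{\tfrac{W}{2p}}.
\]
A second bound also holds, since $|\theta|\le1$ forces $\sum_s\theta_+\ge\sum_s\theta_+^2$. I would also try $\sum_s\theta_+=\tfrac12\sum_s|\theta|$ together with the fact that $\theta_+$ need not be spread out, to obtain $G\gg\min\{1,\dots\}$ when $W$ is small. This is where the $\min\{W,\cdot\}$ branch should come from.

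\textbf{Bounding the quadratic term.} I would apply Cauchy--Schwarz again, using $|f|\le1$:
\[
\sum_y\nu f^2\le\Bigl(\sum_y\nu^2\Bigr)^{1/2}\Bigl(\sum_y f^4\Bigr)^{1/2}\le\Bigl(\tfrac{1+W}{p}\Bigr)^{1/2}\Bigl(\sum_y f^2\Bigr)^{1/2}.
\]
By the same orthogonality, $\sum_y f^2\le p\sum_s\mu^2$. Moreover
\[
\sum_s\mu^2=\frac{\sum\theta_+^2}{(\sum\theta_+)^2}=\frac{G^2}{\sum\theta_+^2}\le\frac{2G^2}{W}.
\]
Altogether,
\[
\sum_y\nu f^2\le G\sqrt{2(1+W)/W}.
\]
The quadratic error is therefore linear in $G$, which is the crucial point.

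\textbf{Choosing $\eta$.} Take $\eta=\tfrac12\min\{1,\sqrt{W/(2(1+W))}\}$. Then the gain is at least $\eta G/2$. When $W\ge1$ this is $\gg G\ge\sqrt{W/2p}$, which gives the $\sqrt{W/p}$ branch with an explicit constant of the shape $1/(32\sqrt2)$.

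\textbf{Main obstacle: small $W$.} The hard case is $W<1$. There the choice above only yields about $W/\sqrt p$, whereas the target is $\min\{W,\sqrt{W/p}\}$. For this regime I would first try the cruder error bound $\sum\nu f^2\le1$ with $\eta\asymp G$, which gives a gain $\gg G^2\ge W/(2p)$. I would combine it with a sharper lower bound on $G$ exploiting $|\theta|\le1$, namely $G\ge\sum\theta_+^2/\sum\theta_+$ with $\sum_s\theta_+\le\sqrt{\#\{\theta>0\}\,W}$. If that still falls short, I would instead choose $\mu$ concentrated on the few shifts where $\theta$ is largest, so that $G$ is of order $\max_s\theta(s)$. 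Balancing these two choices of $\mu$ is the step I expect to require the most care in reaching exactly the $\min\{W,\sqrt{W/p}\}$ form.
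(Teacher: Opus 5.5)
Your setup matches the paper's. Your $\mu$, $G$ and $f$ are its $\sigma$, $a$ and $g$, and the mixture $\lambda=(1-\eta)\lambda_{\unif}+\eta\mu$ is the same. Your argument is correct as written when $W\ge1$. For $W<1$, however, there is a genuine gap. It lies not in the choice of $\mu$ (no concentration on the largest shifts is needed) but in your bound for $\sum_y\nu(y)f(y)^2$. Applying Cauchy--Schwarz to $\nu$ as a whole gives $G\sqrt{2(1+W)/W}$. For small $W$ this is roughly the \emph{square root} of the true uniform contribution $\E_{\unif}f^2\le2G^2/W$. With that error term, the best choice of $\eta$ gives a gain of order $\sqrt W\,G$, and the trivial bound $\sum_y\nu f^2\le1$ gives only order $G^2$. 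All you know is $G\ge\sqrt{W/(2p)}$, so these yield about $W/\sqrt p$ and $W/p$. Both fall short of $\min\{W,\sqrt{W/p}\}$, by a factor $\sqrt W$, or $\sqrt p$ when $W<1/p$. This regime matters in the paper. Corollary~\ref{cor:square-root-deficit} uses $W$ as small as about $b/\sqrt p$, and Theorem~\ref{thm:twoset-density} uses only $W\ge1/(2R)$ with $R\to\infty$.

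The missing idea is to split off the uniform part of $\nu$ before applying Cauchy--Schwarz. Write $p\nu=1+h$, so that $\E_{\unif}h^2=W$. Your orthogonality computation gives $\E_{\unif}f^2\le\sum_s\mu(s)^2\le2G^2/W$. The bound $|f|\le1$ (so $f^4\le f^2$) is then used only on the deviation term:
\[
\sum_y\nu(y)f(y)^2=\E_{\unif}f^2+\E_{\unif}(hf^2)\le\E_{\unif}f^2+\sqrt{W\,\E_{\unif}f^4}\le\frac{2G^2}{W}+\sqrt2\,G.
\]
The main term is now quadratic in $G$, so the mixing weight must depend on $G$ and not only on $W$. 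Take $\eta=\frac1{16}\min\{1,W/G\}$. Your expansion $\log(1+x)\ge x-x^2$ then gives
\[
\sum_y\nu(y)\log Z(y)\ge\eta G\Bigl(1-\frac{2\eta G}{W}-\sqrt2\,\eta\Bigr)\ge\frac{\eta G}{2}=\frac1{32}\min\{G,W\}.
\]
Together with $G\ge\sqrt{W/(2p)}$, this yields the stated constant $1/(32\sqrt2)$. This is exactly the paper's proof. In particular, the $\min\{W,\cdot\}$ branch does not come from a sharper lower bound on $G$. It arises when $G>W$, where the $2G^2/W$ term forces $\eta\asymp W/G$.
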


\begin{proof}
We use the character-correlation identity
\begin{equation}\label{eq:char-correlation}
        \sum_{s\in\F_p}\chi(y-s)\chi(z-s)
        =\begin{cases}p-1,&y=z,\\-1,&y\ne z.\end{cases}
\end{equation}
The diagonal case is immediate. If $y\ne z$, a change of variable reduces
the sum to $\sum_t\chi(t(t-1))$. The number of pairs $(t,u)$ with
$u^2=t(t-1)$ is $p+\sum_t\chi(t(t-1))$. On the other hand,
\[
        (2t-1-2u)(2t-1+2u)=1
\]
parametrizes these pairs bijectively by the $p-1$ nonzero choices of the
first factor. This proves~\eqref{eq:char-correlation}.

Write $W=W(\nu)$ and define
\[
        \theta_s=\sum_y\nu(y)\chi(y-s).
\]
By~\eqref{eq:char-correlation},
\begin{equation}\label{eq:theta}
        \sum_s\theta_s^2
        =p\sum_y\nu(y)^2-\left(\sum_y\nu(y)\right)^2=W.
\end{equation}
Choose $\varepsilon$ so that, on putting
\[
        u_s=(\varepsilon\theta_s)_+,\qquad
        U_1=\sum_su_s,\qquad U_2=\sum_su_s^2,
\]
we have $U_2\ge W/2$. In particular $U_1>0$. Set
\[
        a=\frac{U_2}{U_1},\qquad
        \sigma(s)=\frac{u_s}{U_1},\qquad
        g(y)=\sum_s\sigma(s)\varepsilon\chi(y-s).
\]
Then $\sigma$ is a probability measure, $|g|\le1$, and
\begin{equation}\label{eq:g}
        \sum_y\nu(y)g(y)=a.
\end{equation}
Cauchy--Schwarz gives $U_1^2\le pU_2$, so
\begin{equation}\label{eq:lb}
        a\ge\sqrt{\frac{U_2}p}\ge\sqrt{\frac{W}{2p}}.
\end{equation}
A second application of~\eqref{eq:char-correlation} yields
\begin{equation}\label{eq:g^2}
        \E_{\unif}g^2
        =\frac{pU_2-U_1^2}{pU_1^2}
        \le\frac{U_2}{U_1^2}
        =\frac{a^2}{U_2}\le\frac{2a^2}{W}.
\end{equation}

To estimate the same moment under $\nu$, write $p\nu=1+f$. Then
$\E_{\unif}f^2=W$, and Cauchy--Schwarz together with $|g|\le1$ gives
\begin{align}
        \sum_y\nu(y)g(y)^2
        &\le \E_{\unif}g^2+
              \sqrt{W\E_{\unif}g^4}\notag\\
        &\le \E_{\unif}g^2+
              \sqrt{W\E_{\unif}g^2}
        \le\frac{2a^2}{W}+\sqrt2\,a.
        \label{eq:g-second-moment}
\end{align}
Choose
\[
        \rho=\frac1{16}\min\left\{1,\frac Wa\right\},
        \qquad
        \lambda=(1-\rho)\lambda_{\unif}+\rho\sigma.
\]
Since the character has mean zero, the resulting weight is
$Z(y)=1+\rho g(y)\ge15/16$. Using
$\log(1+t)\ge t-2t^2$ for $|t|\le1/2$ and
\eqref{eq:g}--\eqref{eq:g-second-moment}, we obtain
\begin{align*}
        \sum_y\nu(y)\log Z(y)
        &\ge\rho a-2\rho^2\left(\frac{2a^2}{W}+\sqrt2\,a\right)\\
        &=\rho a\left(1-\frac{4\rho a}{W}-2\sqrt2\,\rho\right)
        \ge\frac{\rho a}{2}
        =\frac1{32}\min\{W,a\}.
\end{align*}
The last inequality uses $\rho a/W\le1/16$ and $\rho\le1/16$.
Finally~\eqref{eq:lb} proves the claim.
\end{proof}

\section{Combining the local weights}\label{sec:product}

Jensen's inequality combines the local logarithmic gains without any
independence assumption on the residue distributions of the set.

\begin{proposition}\label{prop:global-amplification}
Let $E\subseteq[N]$ be nonempty, let $\mathcal P$ be a finite set of primes
$p\equiv3\pmod4$, and suppose
\[
        \Delta=\prod_{p\in\mathcal P}p\le\lfloor\sqrt N\rfloor.
\]
For each $p\in\mathcal P$, let $\nu_p$ be the residue distribution of $E$
modulo $p$. Suppose there are a common sign $\varepsilon\in\{+1,-1\}$ and
probability measures $\lambda_p$ on $\F_p$ such that
\[
        Z_p(y)=\sum_{s\in\F_p}\lambda_p(s)
                        (1+\varepsilon\chi_p(y-s))\ge1/2
\]
for every $y\in\F_p$, and
\[
        \sum_y\nu_p(y)\log Z_p(y)\ge\gamma_p.
\]
Then there is an integer $m$, with $|m|\le2N$, such that
\[
        |E\cap\{m+\varepsilon x^2:x\in\Z\}|
        \gg\frac{|E|}{\sqrt N}
                   \exp\left(\sum_{p\in\mathcal P}\gamma_p\right).
\]
\end{proposition}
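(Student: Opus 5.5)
We first combine the local inequalities using Jensen's inequality, then convert the result into a count of points of $E$ on a quadratic image. Put $Z(a)=\prod_{p\in\mathcal P}Z_p(a\bmod p)$ for $a\in E$. Since $\log$ is concave and $Z>0$,
\[
        \frac1{|E|}\sum_{a\in E}Z(a)\ge\exp\Bigl(\frac1{|E|}\sum_{a\in E}\sum_{p}\log Z_p(a)\Bigr)=\exp\Bigl(\sum_p\sum_y\nu_p(y)\log Z_p(y)\Bigr)\ge\exp\Bigl(\sum_p\gamma_p\Bigr).
\]
The interchange needs no independence across primes, because $\frac1{|E|}\sum_{a}\log Z_p(a)$ depends only on $\nu_p$. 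The lower bound $Z_p\ge 1/2$ guarantees that the logarithms are finite.

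Next, expand the product. We have $Z(a)=\sum_{(s_p)}\prod_p\lambda_p(s_p)\prod_p(1+\varepsilon\chi_p(a-s_p))$, which is an average over shift vectors drawn from the product measure. By the Chinese remainder theorem, each $(s_p)$ corresponds to a unique residue $s\bmod\Delta$. Since $p\equiv 3\pmod 4$, the factor $1+\varepsilon\chi_p(a-s_p)$ equals the number of $x\in\F_p$ with $a\equiv s_p+\varepsilon x^2\pmod p$. This holds also when $a\equiv s_p$, since then $\chi_p=0$ and there is exactly one root. Hence the product is $r_s(a)=\#\{x\bmod\Delta: a\equiv s+\varepsilon x^2\pmod\Delta\}$. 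Since an average is at most a maximum, some $s$ satisfies
\[
        \sum_{a\in E}r_s(a)\ge|E|\exp\Bigl(\sum_p\gamma_p\Bigr).
\]

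The lifting step converts congruence solutions into exact representations; we expect this to be the main point requiring care. Set $M=\lfloor\sqrt N\rfloor$, so $\Delta\le M$. The quantity $\sum_a r_s(a)$ counts pairs $(a,x)$ with $x\in\Z/\Delta$ and $a\equiv s+\varepsilon x^2\pmod\Delta$. Lift each such $x$ to all integers $x'\equiv x\pmod\Delta$ in $[0,M)$; each class has $\ge\lfloor M/\Delta\rfloor\ge M/(2\Delta)$ lifts. Now count triples $(a,x',m)$ with $m=a-\varepsilon x'^2$, $x'\in[0,M)$ and $m\equiv s\pmod\Delta$. The number of such triples is $\ge \frac{M}{2\Delta}\,|E|\exp(\sum\gamma_p)$. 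For each triple, $|m|\le N+M^2\le 2N$, and $m$ lies in a fixed class mod $\Delta$, so there are at most $4N/\Delta+1\ll N/\Delta$ possible values of $m$. By pigeonhole, some $m$ receives $\gg \frac{M|E|\exp(\sum\gamma_p)}{\Delta}\cdot\frac{\Delta}{N}\asymp\frac{|E|}{\sqrt N}\exp(\sum\gamma_p)$ triples.

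Finally, for fixed $m$, each $a$ arises from at most one $x'\ge0$ with $a=m+\varepsilon x'^2$. Hence the number of triples with this $m$ is at most $|E\cap\{m+\varepsilon x^2\}|$, which gives the claim. The care needed is to ensure that lifts in $[0,M)$ with $x'\ge 0$ do not double count, and that $\Delta\le M$ makes $\lfloor M/\Delta\rfloor\ge1$ comparable to $M/\Delta$; both follow from the hypothesis $\Delta\le\lfloor\sqrt N\rfloor$.
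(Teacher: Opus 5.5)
Your proposal is correct and follows essentially the same route as the paper. You combine the local gains by Jensen, expand $\prod_p Z_p$ as an average of CRT root counts modulo $\Delta$, fix a shift that beats the average, lift residues to an interval of length $\lfloor\sqrt N\rfloor$, and pigeonhole on $m=a-\varepsilon x^2$ within a fixed class modulo $\Delta$; the remaining differences (lifting to $[0,M)$ instead of $[1,M]$, and taking the best $s$ directly rather than via random shifts) are cosmetic.
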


\begin{proof}
Choose the shifts $s_p\in\F_p$ independently with distributions $\lambda_p$,
and write
\[
        K_{\mathbf s}(a)=\prod_{p\in\mathcal P}
                            (1+\varepsilon\chi_p(a-s_p)).
\]
Averaging first in the shifts and then applying Jensen's inequality gives
\begin{align*}
        \E_{\mathbf s}\frac1{|E|}\sum_{a\in E}K_{\mathbf s}(a)
        &=\frac1{|E|}\sum_{a\in E}\prod_pZ_p(a)\\
        &\ge\exp\left(\frac1{|E|}\sum_{a\in E}\sum_p\log Z_p(a)\right)\\
        &\ge G:=\exp\left(\sum_p\gamma_p\right).
\end{align*}
Fix shifts attaining at least this average, and let $0\le S<\Delta$ be
their common lift under the Chinese remainder theorem. Since
$\chi_p(-1)=-1$, the same theorem gives
\[
        K_{\mathbf s}(a)
        =\#\{x\bmod\Delta:a\equiv S+\varepsilon x^2\pmod\Delta\}.
\]
Thus the sum of these root counts over $a\in E$ is at least $|E|G$.

Put $X=\lfloor\sqrt N\rfloor$. Because $\Delta\le X$, every residue class
modulo $\Delta$ occurs at least $\lfloor X/\Delta\rfloor\ge X/(2\Delta)$
times in $[X]$. We obtain at least $X|E|G/(2\Delta)$ pairs $(a,x)\in E\times[X]$
with $a\equiv S+\varepsilon x^2\pmod\Delta$. For every such pair the integer
\[
        m=a-\varepsilon x^2
\]
satisfies $m\equiv S\pmod\Delta$ and $1-N\le m\le2N$.
There are at most $3N/\Delta+1\le4N/\Delta$ possible values of $m$.
Consequently some $m$ occurs for at least
\[
        \frac{X|E|G}{8N}\ge\frac{|E|G}{16\sqrt N}
\]
pairs. Since $x\mapsto m+\varepsilon x^2$ is injective on $[X]$, these pairs
give distinct elements of $E$, as required.
\end{proof}

\section{Quadratic intersections from local restrictions}\label{sec:oneset}

The preceding two propositions give a general estimate in terms of the
number of occupied residue classes. It is useful to state this before
specializing to the half-density hypothesis.

\begin{proposition}\label{prop:variance-amplification}
There is an absolute constant $c>0$ with the following property.
Let $N\ge16$, let $E\subseteq[N]$ be nonempty, and let $\mathcal P$ be a
finite set of primes $p\equiv3\pmod4$ such that
\[
        \prod_{p\in\mathcal P}p\le\lfloor\sqrt N\rfloor/2.
\]
Write $s_p=|E_p|$. Then some $m\in\Z$ and $\varepsilon\in\{+1,-1\}$ satisfy
\begin{equation}\label{eq:support-amplification}
        |E\cap(m+\varepsilon\Z^2)|
        \gg \frac{|E|}{\sqrt N}
        \exp\left(c\sum_{p\in\mathcal P}
                    \sqrt{\frac{p-s_p}{p s_p}}\right),
\end{equation}

\end{proposition}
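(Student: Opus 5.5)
The plan is to combine Proposition~\ref{prop:local-entropy} prime by prime with Proposition~\ref{prop:global-amplification}. The only new ingredient is a lower bound on $W(\nu_p)$ in terms of $s_p=|E_p|$.

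\emph{Step 1 (variance from support).} Since $\nu_p$ is a probability measure supported on $s_p$ classes, Cauchy--Schwarz gives $\sum_y\nu_p(y)^2\ge 1/s_p$. Hence
\[
W_p:=W(\nu_p)\ge \frac{p}{s_p}-1=\frac{p-s_p}{s_p}.
\]
Primes with $s_p=p$ contribute $0$ to the exponent in \eqref{eq:support-amplification}, so we may discard them; for the remaining primes $W_p>0$.

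\emph{Step 2 (local gain).} Proposition~\ref{prop:local-entropy} gives a sign $\varepsilon_p$ and a weight $Z_p\ge1/2$ with local gain
\[
\gamma_p\ge c_0\min\{W_p,\sqrt{W_p/p}\},\qquad c_0=\tfrac{1}{32\sqrt2}.
\]
We claim this minimum is $\ge c_1\sqrt{(p-s_p)/(ps_p)}$. Write $t=(p-s_p)/s_p\ge 1/s_p\ge1/p$, so $W_p\ge t$, and both expressions in the minimum are increasing in $W_p$.
\begin{itemize}
\item The second term is at least $\sqrt{t/p}=\sqrt{(p-s_p)/(ps_p)}$, which is exactly our target.
\item For the first term, $t\ge\sqrt{t/p}$ is equivalent to $t\ge 1/p$, which holds.
\end{itemize}
So $\gamma_p\ge c_0\sqrt{(p-s_p)/(ps_p)}$ directly.

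\emph{Step 3 (common sign and amplification).} The signs $\varepsilon_p$ may differ across primes. Split $\mathcal P=\mathcal P_+\cup\mathcal P_-$ by sign and choose the part $\mathcal P'$ carrying at least half of $\sum_p\sqrt{(p-s_p)/(ps_p)}$. A subproduct of the moduli still satisfies $\prod_{p\in\mathcal P'}p\le\lfloor\sqrt N\rfloor$, so Proposition~\ref{prop:global-amplification} applies to $\mathcal P'$ with common sign $\varepsilon$. It yields $m$ with
\[
|E\cap(m+\varepsilon\Z^2)|\gg \frac{|E|}{\sqrt N}\exp\Big(\frac{c_0}{2}\sum_{p\in\mathcal P}\sqrt{\frac{p-s_p}{ps_p}}\Big),
\]
which is \eqref{eq:support-amplification} with $c=c_0/2$. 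The stronger hypothesis $\Delta\le\lfloor\sqrt N\rfloor/2$ is more than what Proposition~\ref{prop:global-amplification} needs.

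\emph{Main obstacle.} The only delicate point is Step 2: checking that the $\min\{W,\sqrt{W/p}\}$ form never degrades below the target, including the regime where $s_p$ is close to $p$ and $W_p$ may be as small as $1/p$. The inequality $t\ge1/p$ handles this regime.
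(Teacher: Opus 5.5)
Your proposal is correct and follows the paper's proof essentially step for step: the Cauchy--Schwarz bound $W(\nu_p)\ge p/s_p-1$, discarding primes with $s_p=p$, using $t>1/p$ and monotonicity of $\min\{t,\sqrt{t/p}\}$, splitting by sign, and applying Proposition~\ref{prop:global-amplification}. The paper also states explicitly that if no primes survive, one applies Proposition~\ref{prop:global-amplification} to the empty prime set with $\Delta=1$; your argument covers this case only implicitly.
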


\begin{proof}
Let $\nu_p$ be the residue distribution of $E$ modulo $p$. By
Cauchy--Schwarz,
\[
        W(\nu_p)\ge w_p:=\frac{p}{s_p}-1.
\]
Primes with $s_p=p$ contribute zero and may be discarded. For every
remaining prime, $w_p\ge1/(p-1)>1/p$. Since
$t\mapsto\min\{t,\sqrt{t/p}\}$ is increasing, Proposition~\ref{prop:local-entropy}
supplies a sign $\varepsilon_p$ and a weight with logarithmic gain at least
\[
        c_0\sqrt{\frac{w_p}{p}}
        =c_0\sqrt{\frac{p-s_p}{p s_p}}.
\]
One of the two signs carries at least half the sum of these gains. Apply
Proposition~\ref{prop:global-amplification} to the primes carrying that sign.
If no primes remain, apply the same proposition with the empty prime set,
empty product $\Delta=1$, and either sign. This also proves the asserted
bound in that case.
\end{proof}

\begin{corollary}\label{prop:restricted-density}
Fix $C\ge0$. There are constants $c=c(C)>0$ and $p_0=p_0(C)$ such that
the following holds. Let $N\ge16$, let $E\subseteq[N]$ be nonempty, and let $\mathcal P$
be a finite set of primes $p\equiv3\pmod4$, all at least $p_0$, with
$\prod_{p\in\mathcal P}p\le\lfloor\sqrt N\rfloor/2$. If
$|E_p|\le p/2+C$ for every $p\in\mathcal P$, then some
$q(x)=m\pm x^2$ satisfies
\[
        |E\cap q(\Z)|\gg_C\frac{|E|}{\sqrt N}
        \exp\left(c\sum_{p\in\mathcal P}p^{-1/2}\right).
\]
\end{corollary}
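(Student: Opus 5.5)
This corollary should follow directly from Proposition~\ref{prop:variance-amplification}. The only work is to bound each local term $\sqrt{(p-s_p)/(ps_p)}$ from below by a constant multiple of $p^{-1/2}$ when $s_p=|E_p|\le p/2+C$ and $p\ge p_0$.

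Apply Proposition~\ref{prop:variance-amplification} to $E$ and $\mathcal P$. Its hypotheses are met: $N\ge16$, all primes are $\equiv3\pmod4$, and $\prod p\le\lfloor\sqrt N\rfloor/2$. This gives $m$ and $\varepsilon$ with
\[
|E\cap(m+\varepsilon\Z^2)|\gg\frac{|E|}{\sqrt N}\exp\Bigl(c_1\sum_{p\in\mathcal P}\sqrt{\tfrac{p-s_p}{ps_p}}\Bigr),
\]
where $c_1$ is absolute. Note that $s_p\ge1$ because $E$ is nonempty.

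For the local estimate, the map $s\mapsto (p-s)/(ps)$ is decreasing in $s$. So $s_p\le p/2+C$ gives
\[
\frac{p-s_p}{ps_p}\ge\frac{p/2-C}{p(p/2+C)}.
\]
Choose $p_0=p_0(C)$ with $p_0\ge 8C+1$, so that $C\le p/8$ for every $p\ge p_0$. Then the numerator is at least $3p/8$ and the factor $p/2+C$ is at most $5p/8$. Hence the ratio is at least $(3/5)p^{-1}$, and each local term is at least $\sqrt{3/5}\,p^{-1/2}$.

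Substituting, the exponent is at least $c_1\sqrt{3/5}\sum_{p\in\mathcal P}p^{-1/2}$. Setting $c=c_1\sqrt{3/5}$ gives the claim with $q(x)=m+\varepsilon x^2$. The implied constant is absolute, so in particular it is $\ll_C$.

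There is no real obstacle. The only care needed is choosing $p_0$ so that $p/2-C$ stays positive and comparable to $p$. This is also the reason the corollary requires $p\ge p_0$: for small primes the hypothesis $|E_p|\le p/2+C$ is vacuous.
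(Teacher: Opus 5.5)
Your proposal is correct and is essentially the paper's proof. The paper also applies Proposition~\ref{prop:variance-amplification} after noting that $p/|E_p|-1\ge 1/2$ for $p\ge p_0(C)$, which is the same local bound $\sqrt{(p-s_p)/(ps_p)}\gg p^{-1/2}$ that you derive; only the constant differs ($\sqrt{1/2}$ in the paper versus your $\sqrt{3/5}$).
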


\begin{proof}
For $p\ge p_0(C)$ we have $p/|E_p|-1\ge1/2$.
Apply Proposition~\ref{prop:variance-amplification}.
\end{proof}

We use the following elementary consequence of the prime number theorem
in the progression $3\pmod4$. For a sufficiently small absolute constant
$0<\eta<1/4$, put
\begin{equation}\label{eq:logarithmic-primes}
        z=\eta\log N,\qquad
        \mathcal Q_N=\{p:z/2<p\le z,\ p\equiv3\pmod4\}.
\end{equation}
Then, for all sufficiently large $N$,
\begin{equation}\label{eq:logarithmic-prime-mass}
        \prod_{p\in\mathcal Q_N}p\le N^{1/10},\qquad
        \sum_{p\in\mathcal Q_N}p^{-1/2}
        \gg\frac{\sqrt{\log N}}{\log\log N}.
\end{equation}
Indeed, $|\mathcal Q_N|\asymp z/\log z$ and
$\sum_{p\le z}\log p\ll z$.

\begin{proof}[Proof of Theorem~\ref{thm:density}]
Every prime in $\mathcal Q_N$ is at most $\frac14\log N$ and, for sufficiently
large $N$, is at least $p_0(C)$. Apply
Corollary~\ref{prop:restricted-density} directly to $A$ and $\mathcal Q_N$,
using~\eqref{eq:logarithmic-prime-mass}.
For a cutoff $\theta\log N$ with any fixed $\theta>0$, choose the
constant $\eta$ in~\eqref{eq:logarithmic-primes} smaller than $\theta$.
\end{proof}

\begin{remark}\label{rem:local-primes-only}
The proof uses the local hypothesis only for $p\in\mathcal Q_N$.
More generally, if $|A_p|\le(1-\delta)p$ on these primes, with fixed
$0<\delta<1$, the same conclusion holds with $c=c(\delta)>0$.
In particular, the lower bound needs only one subcollection of the primes
in the hypothesis of Theorem~\ref{thm:main}. The matching construction in
Section~\ref{sec:upper} satisfies the half-residue bound at every prime
up to $\frac14\log N$.
\end{remark}

The general estimate also gives a precise version for a vanishing
proportion of omitted residue classes.

\begin{corollary}\label{cor:square-root-deficit}
Fix $b>0$. There are constants $c=c(b)>0$ and $N_0=N_0(b)$ such that,
if $N\ge N_0$ and $A\subseteq[N]$ is nonempty with
\[
        |A_p|\le p-b\sqrt p\qquad(p\in\mathcal Q_N),
\]
then some $q(x)=m\pm x^2$ satisfies
\[
        |A\cap q(\Z)|\gg\frac{|A|}{\sqrt N}
        \exp\left(c\frac{(\log N)^{1/4}}{\log\log N}\right).
\]
\end{corollary}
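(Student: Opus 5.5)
The plan is to apply Proposition~\ref{prop:variance-amplification} directly to $E=A$ with $\mathcal P=\mathcal Q_N$ from \eqref{eq:logarithmic-primes}, and to bound each term of the exponent from below.

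First, the size condition on the modulus. By \eqref{eq:logarithmic-prime-mass}, $\prod_{p\in\mathcal Q_N}p\le N^{1/10}\le\lfloor\sqrt N\rfloor/2$ once $N\ge N_0$. So the proposition applies, and for some $m$ and sign $\varepsilon$ it gives
\[
        |A\cap(m+\varepsilon\Z^2)|\gg\frac{|A|}{\sqrt N}\exp\left(c\sum_{p\in\mathcal Q_N}\sqrt{\frac{p-s_p}{ps_p}}\right),
        \qquad s_p=|A_p|.
\]

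Second, the local terms. The function $s\mapsto(p-s)/(ps)$ is decreasing in $s$, and the hypothesis gives $s_p\le p-b\sqrt p$. Also $s_p\le p$, and $p-b\sqrt p\ge1$ for large $p$; if $s_p$ is smaller the bound only improves. Hence
\[
        \sqrt{\frac{p-s_p}{ps_p}}\ge\sqrt{\frac{b\sqrt p}{p\cdot p}}=\sqrt b\,p^{-3/4}.
\]

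Third, the prime sum. Every $p\in\mathcal Q_N$ lies in $(z/2,z]$ with $z=\eta\log N$, so $p^{-3/4}\ge z^{-3/4}$. The prime number theorem in the progression $3\bmod 4$ gives $|\mathcal Q_N|\asymp z/\log z$. Therefore
\[
        \sum_{p\in\mathcal Q_N}p^{-3/4}\gg\frac{z^{1/4}}{\log z}\gg\frac{(\log N)^{1/4}}{\log\log N}.
\]
Absorbing $\sqrt b$ and $\eta$ into $c=c(b)$ gives the claim. The quadratic is $q(x)=m+\varepsilon x^2$.

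There is no real obstacle here. The only points needing care are the monotonicity of $(p-s)/(ps)$ in $s$, which justifies replacing $s_p$ by its upper bound, and taking $N_0(b)$ large enough that $p-b\sqrt p>0$ on $\mathcal Q_N$.
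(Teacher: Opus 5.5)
Your proposal is correct and matches the paper's proof: apply Proposition~\ref{prop:variance-amplification} with $\mathcal P=\mathcal Q_N$, use the hypothesis to get the local bound $\sqrt{(p-|A_p|)/(p|A_p|)}\ge\sqrt b\,p^{-3/4}$, and sum $p^{-3/4}$ over $\mathcal Q_N$ to obtain order $(\log N)^{1/4}/\log\log N$. Your extra remarks on the monotonicity of $1/s-1/p$ in $s$ and on the product bound $N^{1/10}\le\lfloor\sqrt N\rfloor/2$ fill in details the paper leaves implicit.
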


\begin{proof}
For $p\in\mathcal Q_N$,
\[
        \sqrt{\frac{p-|A_p|}{p|A_p|}}
        \ge\sqrt b\,p^{-3/4}.
\]
The sum of $p^{-3/4}$ over $\mathcal Q_N$ is
$\asymp(\log N)^{1/4}/\log\log N$. Apply
Proposition~\ref{prop:variance-amplification}.
\end{proof}

\section{A stronger lower bound for a larger prime range}\label{sec:largerrange}
In this section, we prove Theorem~\ref{thm:largerrange}.

Before stating the proof, we observe that for any fixed sign $\varepsilon\in \{\pm1\}$ and any nonnegative kernel $K: \Z\rightarrow \mathbb R_{\geq 0}$, 
\begin{equation}\label{eq:kernel}
 \sum_{a\in A}\sum_{x=1}^{\lfloor\sqrt{N}\rfloor}K(a+ \varepsilon x^2)\leq 
      \max_{m\in \Z}|A\cap \{m-\varepsilon x^2:x\in \Z\}|\sum_{m=-N}^{2N}K(m).
\end{equation}
Choosing the constant kernel $K = 1$ gives a trivial bound $ \max_{m\in \Z}|A\cap \{m-\varepsilon x^2:x\in \Z\}|\gg |A|/\sqrt{N}$. Our goal is to construct a suitable kernel $K$ so that we have extra gain from local character weights. Our ultimate choice of $K$ resembles the construction of Selberg sieve, where we take $K$ to be the square of a truncated sum over certain divisors.

We start with the choice of local character weights, which is essentially a normalized discrepancy and differs from our previous choice in Section~\ref{sec:local}. Define \begin{equation}\label{eq:local-definitions}
 c_p=\sqrt{\frac{p}{|A_p|(p-|A_p|)}},\qquad
 f_p(s)=c_p\sum_{y\in A_p}\chi_p(y-s),\qquad
 \kappa_p=\sqrt{\frac{p-|A_p|}{p|A_p|}}.
\end{equation}
Here $c_p$ is a normalization factor, unrelated to the
constant $c$ in Theorem~\ref{thm:largerrange}.
\begin{lemma}\label{lem:char_mean}
Suppose $1\le |A_p|\le(p-1)/2$. Then
\begin{equation}\label{eq:uniform-moments}
 \E_{s\bmod p}f_p(s)=0,\qquad
 \E_{s\bmod p}f_p(s)^2=1,\qquad
 \|f_p\|_\infty\le\sqrt p,\qquad
 p^{-1/2}\le\kappa_p\le1.
\end{equation}
For $a\in A_p$, put
\[
 v_p(a)=
 \E_{s\bmod p}f_p(s)^2\chi_p(a-s).
\]
Then $|v_p(a)|\le1$.

If also $p\equiv3\bmod4$ and
$g_{p,\varepsilon}=\varepsilon f_p$, where
$\varepsilon\in\{1,-1\}$, then
\begin{align}
 \E_{x\bmod p}
 g_{p,\varepsilon}(a-\varepsilon x^2)
 &=\kappa_p,
 \label{eq:linear-bias}\\
 \E_{x\bmod p}
 g_{p,\varepsilon}(a-\varepsilon x^2)^2
 &=1+\varepsilon v_p(a).
 \label{eq:quadratic-moment}
\end{align}
In particular, $0\le1+\varepsilon v_p(a)\le2$.
\end{lemma}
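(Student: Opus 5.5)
All the claims reduce to two facts: the character-correlation identity \eqref{eq:char-correlation} from the proof of Proposition~\ref{prop:local-entropy}, and a root count for $\varepsilon x^2=u$ in $\F_p$. Write $k=|A_p|$, so $1\le k\le (p-1)/2$ and in particular $k\le p-k$. The main risk is sign bookkeeping in the last two identities; the rest is direct computation.

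\emph{Uniform moments.} Since $\sum_s\chi_p(y-s)=0$ for each $y$, we get $\E_s f_p(s)=0$. For the second moment, expand $f_p(s)^2$ as a double sum over $y,z\in A_p$ and apply \eqref{eq:char-correlation}:
\[
\E_s f_p(s)^2=\frac{c_p^2}{p}\bigl(k(p-1)-k(k-1)\bigr)=\frac{c_p^2\,k(p-k)}{p}=1 .
\]
For the sup norm, the trivial bound $|f_p(s)|\le c_pk$ gives $c_p^2k^2=pk/(p-k)\le p$, because $k\le p-k$. For $\kappa_p$, we have $\kappa_p^2=(p-k)/(pk)$. This is at most $1$ since $k\ge1$, and at least $1/p$ exactly when $p-k\ge k$. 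Finally, $|v_p(a)|\le \E_s f_p(s)^2=1$, since $|\chi_p|\le1$.

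\emph{Averaging over the quadratic.} Take $p\equiv3\pmod4$, so $\chi_p(-1)=-1$. The key counting step is that the number of $x\in\F_p$ with $a-\varepsilon x^2=s$ equals $1+\chi_p(\varepsilon(a-s))=1+\varepsilon\chi_p(a-s)$. Hence for any function $h$ on $\F_p$,
\[
\E_{x}h(a-\varepsilon x^2)=\E_s h(s)+\varepsilon\,\E_s h(s)\chi_p(a-s).
\]

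\emph{Second moment identity.} Applying this with $h=f_p^2$ gives \eqref{eq:quadratic-moment} at once:
\[
\E_x g_{p,\varepsilon}(a-\varepsilon x^2)^2=1+\varepsilon v_p(a),
\]
using $\varepsilon^2=1$. The left side is an average of squares, so it is nonnegative. Combined with $|v_p(a)|\le1$ this gives $0\le 1+\varepsilon v_p(a)\le 2$.

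\emph{Linear bias.} Apply the same formula with $h=\varepsilon f_p$. The first term vanishes because $\E_s f_p=0$. For the second term, $\varepsilon^2=1$, and \eqref{eq:char-correlation} gives $\E_s \chi_p(y-s)\chi_p(a-s)=(p-1)/p$ if $y=a$ and $-1/p$ otherwise. Summing over $y\in A_p$ with $a\in A_p$ yields
\[
\E_x g_{p,\varepsilon}(a-\varepsilon x^2)=c_p\Bigl(\frac{p-1}{p}-\frac{k-1}{p}\Bigr)=c_p\,\frac{p-k}{p}=\sqrt{\frac{p-k}{pk}}=\kappa_p,
\]
which is \eqref{eq:linear-bias}.
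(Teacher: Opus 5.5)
Your proof is correct and follows the paper's argument essentially step by step: the character-correlation identity \eqref{eq:char-correlation} gives the moments and $\E_s f_p(s)\chi_p(a-s)=\kappa_p$, the trivial bound gives $\|f_p\|_\infty\le\sqrt p$, and the root count $1+\varepsilon\chi_p(a-s)$ converts $x$-averages into $s$-averages, applied to $\varepsilon f_p$ and to $f_p^2$. Your average-of-squares argument for $1+\varepsilon v_p(a)\ge0$ is valid but redundant, since $|v_p(a)|\le1$ already gives $0\le1+\varepsilon v_p(a)\le2$.
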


\begin{proof}
By orthogonality and the identity~\eqref{eq:char-correlation}, we have $\E_s f_p(s) = 0$ and
\[
 \E_s f_p(s)^2
 =
 c_p^2\left(b_p-\frac{b_p^2}{p}\right)=1.
\]
Moreover, from the assumption on $A_p$,
\[
 |f_p(s)|
 \le c_p|A_p|
 =\sqrt{\frac{p|A_p|}{p-|A_p|}}
 \le\sqrt p.
\]
The bounds on $\kappa_p$ follow directly from
$1\le |A_p|\le p/2$. The bound on $v_p(a)$ follows from
$|\chi_p|\le1$ and $\E_s f_p(s)^2=1$.

For $a\in A_p$, identity~\eqref{eq:char-correlation} also gives
\[
 \E_s f_p(s)\chi_p(a-s)
 =
 c_p\left(1-\frac{|A_p|}{p}\right)=\kappa_p.
\]

When $p\equiv3\bmod4$, we have
$\chi_p(\varepsilon)=\varepsilon$.
The number of solutions to $a-\varepsilon x^2=s$ is
therefore $1+\varepsilon\chi_p(a-s)$.
Hence, for every function $F$ on $\F_p$,
\[
 \E_{x\bmod p}F(a-\varepsilon x^2)
 =
 \E_{s\bmod p}
 F(s)\bigl(1+\varepsilon\chi_p(a-s)\bigr).
\]
Apply this identity first to $F=\varepsilon f_p$ and then
to $F=f_p^2$. This proves equations~\eqref{eq:linear-bias} and
\eqref{eq:quadratic-moment}.
\end{proof}

From now on, fix
\begin{equation}\label{eq:parameters}
 \begin{gathered}
 U=\sqrt{\frac{\log N}{\log\log N}},\qquad
 X=U^6,\qquad
 D=\lfloor N^{1/20}\rfloor,\\
 H=\lfloor\sqrt N\rfloor,\qquad
 \eta=10^{-3}.
 \end{gathered}
\end{equation}
Every subsequent assertion for sufficiently large $N$
can be ensured by increasing the absolute constant $N_0$.
Let
\[
 \mathcal P_0=
 \{p:U^3<p\le U^6,\ p\equiv3\bmod4\}.
\]
The prime number theorem in this fixed arithmetic progression,
followed by partial summation, gives
\[
 \sum_{p\in\mathcal P_0}\frac1p
 =
 \frac12\log\left(\frac{\log U^6}{\log U^3}\right)+o(1)
 =
 \frac12\log2+o(1).
\]

For each $p\in\mathcal P_0$, define
\[
 \overline v_p
 =
 \frac{1}{|A|}\sum_{a\in A}v_p(a\bmod p).
\]
Partition $\mathcal P_0$ according to whether
$\overline v_p\ge0$ or $\overline v_p<0$.
One part has at least half the total
reciprocal-prime mass. Choose that part as $\mathcal P$,
and choose $\varepsilon=1$ for the first part or
$\varepsilon=-1$ for the second. Then
\begin{equation}\label{eq:sign-selection}
 \varepsilon\overline v_p\ge0
 \quad(p\in\mathcal P),\qquad
 \frac{\log2}{8}
 \le W:=\sum_{p\in\mathcal P}\frac1p
 \le1
\end{equation}
for sufficiently large $N$.

Write $g_p=\varepsilon f_p$. Now for each $p\in \mathcal{P}$, we choose the weights 
\[t_p=\frac{\eta U}{p\kappa_p}\]
and define 
\[
 Q=\prod_{p\in\mathcal P}p,\qquad
 R(s)=\prod_{p\in\mathcal P}(1+t_pg_p(s)).
\]
Our final choice of the kernel $K$ will be a truncation of $R(s)^2$, but let us first analyze the average behavior of $R(s)^2$ modulo $Q$. By the Chinese remainder theorem and
\eqref{eq:uniform-moments},
\begin{equation}\label{eq:C0}
 C_0:=
 \E_{s\bmod Q}R(s)^2
 =
 \prod_{p\in\mathcal P}(1+t_p^2)\ge1.
\end{equation}
For a fixed $a\in A$, the same theorem and
Lemma~\ref{lem:char_mean} give
\begin{equation}\label{eq:F}
 F(a):=
 \E_{x\bmod Q}R(a-\varepsilon x^2)^2
 =
 \prod_{p\in\mathcal P}
 \bigl(1+2t_p\kappa_p+t_p^2(1+\varepsilon v_p(a))\bigr).
\end{equation}
These factorizations concern the uniformly averaged
variable $s$ or $x$. They do not assume that the residue
distributions of $A$ at different primes are independent. For simplicity, let 
\begin{equation}\label{eq:coefficients}
 \begin{gathered}
 G=\sum_{p\in\mathcal P}t_p\kappa_p,\qquad
 B=\sum_{p\in\mathcal P}t_p^2.
 \end{gathered}
\end{equation}
We have the following lower bound on the average of $F(a)$.
\begin{proposition}\label{prop:amplification}
For sufficiently large $N$,
\[
 \frac{1}{|A|}\sum_{a\in A}F(a)\ge C_0e^G.
\]
\end{proposition}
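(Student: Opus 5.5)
The plan is to normalize by $C_0$ and apply Jensen's inequality to the logarithm of each factor, as in Proposition~\ref{prop:global-amplification}. By \eqref{eq:C0} and \eqref{eq:F},
\[
 \frac{F(a)}{C_0}=\prod_{p\in\mathcal P}\bigl(1+x_p(a)\bigr),\qquad
 x_p(a)=\frac{2t_p\kappa_p+\varepsilon t_p^2v_p(a)}{1+t_p^2}.
\]
Here $x_p(a)$ depends only on $a\bmod p$. The hypotheses of Lemma~\ref{lem:char_mean} hold for every $p\in\mathcal P$. Indeed, $A$ is nonempty, $|A_p|<(p+1)/2$ means $|A_p|\le(p-1)/2$, and $p\le U^6\le(\log N)^3$.

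By the AM--GM/Jensen inequality,
\[
 \frac1{|A|}\sum_{a\in A}\frac{F(a)}{C_0}
 \ge\exp\Bigl(\sum_{p\in\mathcal P}\frac1{|A|}\sum_{a\in A}\log(1+x_p(a))\Bigr),
\]
so it suffices to show that the exponent is at least $G$.

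\textbf{Size bounds.} Since $\kappa_p\ge p^{-1/2}$ and $p>U^3$, we have
\[
 t_p=\frac{\eta U}{p\kappa_p}\le\frac{\eta U}{\sqrt p}\le\eta U^{-1/2},
 \qquad
 t_p\kappa_p=\frac{\eta U}{p}\le\eta U^{-2}.
\]
Together with $|v_p|\le1$, this gives $|x_p(a)|\le 2t_p\kappa_p+t_p^2\le1/2$ for large $N$. We may therefore use $\log(1+x)\ge x-x^2$.

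\textbf{Linear term.} Averaging over $a\in A$, the term $\varepsilon t_p^2v_p(a)$ averages to $\varepsilon t_p^2\overline v_p\ge0$ by the sign selection \eqref{eq:sign-selection}. So the average of $x_p$ is at least
\[
 \frac{2t_p\kappa_p}{1+t_p^2}\ge 2t_p\kappa_p-2t_p^3\kappa_p .
\]

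\textbf{Quadratic term.} We have $x_p^2\le 8t_p^2\kappa_p^2+2t_p^4$. The exponent is then at least
\[
 2G-\sum_p\bigl(2t_p^3\kappa_p+8t_p^2\kappa_p^2+2t_p^4\bigr),
\]
and it remains to show that this error sum is at most $G$. Two of the terms are relative errors:
\[
 t_p^3\kappa_p\le\eta^2U^{-1}\,t_p\kappa_p,\qquad
 t_p^2\kappa_p^2\le\eta U^{-2}\,t_p\kappa_p ,
\]
so together they contribute $o(G)$. For the last term, $t_p^4\le\eta^4U^4/p^2$, and summing over $p>U^3$ gives
\[
 \sum_{p\in\mathcal P}t_p^4\le\eta^4U .
\]
On the other hand,
\[
 G=\sum_{p\in\mathcal P}\frac{\eta U}{p}=\eta UW\ge\frac{\eta U\log2}{8}
\]
by \eqref{eq:sign-selection}. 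Since $\eta=10^{-3}$, we get $2\eta^4U\le G/2$. This gives exponent $\ge G$, as required.

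The only real point to check is this bookkeeping of the $t_p^4$ error against $G$. It works because $t_p^2$ is only $O(\eta^2U/p)$, while the gain $t_p\kappa_p$ equals $\eta U/p$ exactly. That is where the lower bound $\kappa_p\ge p^{-1/2}$ and the choice $p>U^3$ are used.
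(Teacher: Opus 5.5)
Your proof is correct and follows essentially the same route as the paper. Both arguments normalize by $C_0$, apply Jensen with $\log(1+z)\ge z-z^2$, use the sign selection to drop $\varepsilon t_p^2\overline v_p$, and absorb the quadratic errors using $\kappa_p\ge p^{-1/2}$ and $p>U^3$. The only variation is in the quartic term. The paper bounds it pointwise via $t_p^4/(t_p\kappa_p)=t_p^3/\kappa_p\le\eta^3U^3/p\le\eta^3$, so $\sum_p t_p^4\le\eta^3G$ without needing the lower bound on $W$. You instead sum $p^{-2}$ over $p>U^3$ and compare with $G\ge\eta U\log 2/8$, which also works. Your closing remark should read $t_p^2\le\eta^2U^2/p$ rather than $O(\eta^2U/p)$; the computation itself already uses the correct bound.
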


\begin{proof}
Define
\[
 z_p(a)=
 \frac{2t_p\kappa_p+\varepsilon t_p^2v_p(a)}
      {1+t_p^2}.
\]
Then from equations~\eqref{eq:C0}, \eqref{eq:F}, and the fact that $p> U^3$, we obtain
\[
 \frac{F(a)}{C_0}=\prod_{p\in \mathcal{P}}(1+z_p(a)),
 \qquad
 |z_p(a)|\le2t_p\kappa_p+t_p^2\le\frac{2\eta^2}{U}+\frac{\eta}{U^2}\leq \frac12
\]
for sufficiently large $N$.

The elementary inequality
$\log(1+z)\ge z-z^2$ for $|z|\le1/2$, together with
\eqref{eq:sign-selection}, yields
\begin{align}
 \frac{1}{|A|}\sum_{a\in A}\log\frac{F(a)}{C_0}
 &\ge
 \sum_{p\in \mathcal{P}}\frac{2t_p\kappa_p}{1+t_p^2}
 -
 \sum_{p\in \mathcal{P}}\frac{1}{|A|}\sum_{a\in A}z_p(a)^2\notag\\
 &\ge
 \sum_{p\in \mathcal{P}}\frac{2t_p\kappa_p}{1+t_p^2}
 -
 8\sum_{p\in \mathcal{P}}(t_p\kappa_p)^2-2\sum_{p\in \mathcal{P}} t_p^4\label{eq:saving}\\
 &\ge
 \left(
 \frac{2}{1+\eta^2/U}
 -
 \frac{8\eta}{U^2}-2\eta^3
 \right)G\notag\\
 &\ge G.\notag
\end{align}
In the second line we used
\[
 (2t_p\kappa_p+t_p^2)^2
 \le8(t_p\kappa_p)^2+2t_p^4,
\]
while in the third we used 
\[
 \frac{t_p^4}{t_p\kappa_p}
 =
 \frac{t_p^3}{\kappa_p}
 =
 \frac{\eta^3U^3}{p^3\kappa_p^4}
 \le
 \frac{\eta^3U^3}{p}
 \le\eta^3.
\]
For the fixed choice $\eta=10^{-3}$, the final inequality
holds when $N$ is sufficiently large.

Jensen's inequality for the exponential function now gives
\[
 \frac{1}{|A|}\sum_{a\in A}\frac{F(a)}{C_0}
 \ge
 \exp\left(
 \frac{1}{|A|}\sum_{a\in A}\log\frac{F(a)}{C_0}
 \right)
 \ge e^G.\qedhere
\]
\end{proof}

If we choose the kernel $K(s) = R(s)^2$ directly in equation~\eqref{eq:kernel}, then because $Q$ is much larger than $N$, when we use $F(a)$ to approximate the left-hand side of equation~\eqref{eq:kernel}, the error term depending on $Q$ will beat the lower bound from Proposition~\ref{prop:amplification}. To fix this issue, we will choose a truncation of $R(s)$ and apply Rankin's trick to estimate the discarded terms. After truncation, the error will no longer depend on $Q$ since the functions involved will be periodic with a period depending on the truncation parameter, which we choose to be small compared to $N$.

More precisely, for each divisor $d$ of the squarefree integer $Q$, define
\[
 t_d=\prod_{p\mid d}t_p,\qquad
 g_d(s)=\prod_{p\mid d}g_p(s),
 \qquad
 t_1=g_1=1.
\]
Then $R(s)=\sum_{d\mid Q}t_dg_d(s)$. Write
\[
 \mathcal D=\{d:d\mid Q,\ 1\le d\le D\},\qquad
 R_D(s)=\sum_{d\in\mathcal D}t_dg_d(s),
\]
and define
\[
 C_D=\E_{s\bmod Q}R_D(s)^2,\qquad
 F_D(a)=
 \E_{x\bmod Q}R_D(a-\varepsilon x^2)^2.
\]
The following proposition shows that the discarded terms do not contribute too much to $F(a)$.

\begin{proposition}\label{prop:truncation}
For sufficiently large $N$, uniformly for $a\in A$,
\begin{equation}\label{eq:tail-final}
 0\le F(a)-F_D(a)
 \le2C_0\exp(-U^2/240).
\end{equation}
Consequently,
\begin{equation}\label{eq:FD-lower}
 \frac{1}{|A|}\sum_{a\in A}F_D(a)
 \ge\frac12C_0e^G.
\end{equation}
\end{proposition}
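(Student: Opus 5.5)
The approach is to expand both $F(a)$ and $F_D(a)$ as double sums over divisors of $Q$, check that every summand is nonnegative, and bound the discarded pairs with Rankin's trick.

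\emph{Expansion.} Write $R(s)=\sum_{d\mid Q}t_dg_d(s)$, so that
\[
 F(a)=\sum_{d,e\mid Q}t_dt_e\,M_a(d,e),\qquad M_a(d,e)=\E_{x\bmod Q}g_d(a-\varepsilon x^2)g_e(a-\varepsilon x^2).
\]
By the Chinese remainder theorem, $M_a(d,e)$ factors over primes $p\mid Q$. A prime dividing neither $d$ nor $e$ contributes $1$. A prime dividing exactly one of them contributes $\E_x g_p(a-\varepsilon x^2)=\kappa_p$, by \eqref{eq:linear-bias}. A prime dividing both contributes $\E_xg_p(a-\varepsilon x^2)^2=1+\varepsilon v_p(a)\in[0,2]$, by \eqref{eq:quadratic-moment}. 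Hence
\[
 M_a(d,e)=\prod_{p\mid (d,e)}\bigl(1+\varepsilon v_p(a)\bigr)\prod_{p\mid de/(d,e)^2}\kappa_p\ \ge 0 .
\]
Since also $t_d\ge0$, every term is nonnegative. Now $F_D(a)$ is the same double sum restricted to $d,e\in\mathcal D$. So $F(a)-F_D(a)$ is the sum over pairs with $\max(d,e)>D$, which gives the lower bound $0$ at once.

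\emph{Rankin's trick.} For the upper bound, I bound $M_a(d,e)$ by replacing each factor $1+\varepsilon v_p$ with $2$. I bound the indicator of $\max(d,e)>D$ by $(d/D)^\sigma+(e/D)^\sigma$ for some $\sigma>0$, and use the symmetry between $d$ and $e$. The resulting full sum factorizes over primes:
\[
 F(a)-F_D(a)\le 2D^{-\sigma}\prod_{p\in\mathcal P}\Bigl(1+t_p\kappa_p(1+p^\sigma)+2t_p^2p^\sigma\Bigr).
\]
I would take $\sigma=1/(6\log U)$, so that $p^\sigma\le e$ for all $p\le U^6$. Dividing by $C_0=\prod(1+t_p^2)$ and using $1+x\le e^x$, the ratio of products is at most
\[
 \exp\Bigl(\sum_p (1+e)t_p\kappa_p+(2e-1)t_p^2\Bigr).
\]
Now $t_p\kappa_p=\eta U/p$, so $\sum_p t_p\kappa_p\le\eta U W\le\eta U$. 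Also $t_p^2=\eta^2U^2/(p^2\kappa_p^2)\le\eta^2U^2/p$ because $\kappa_p\ge p^{-1/2}$, so $\sum_p t_p^2\le\eta^2U^2$. The exponent is therefore $O(\eta U)+5\eta^2U^2$.

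\emph{The main obstacle: bookkeeping of constants.} We need $D^{-\sigma}$ to beat this exponent with room left for $\exp(-U^2/240)$. We have $\log D\approx\frac1{20}\log N=\frac1{20}U^2\log\log N$ and $\log U\sim\frac12\log\log N$. So $\sigma\log D\sim\frac{U^2\log\log N}{120\cdot\frac12\log\log N}=U^2/60$. With $\eta=10^{-3}$, the term $5\eta^2U^2$ and the $O(\eta U)$ term are negligible against $U^2/60-U^2/240$. This gives \eqref{eq:tail-final} for large $N$.

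\emph{Consequence.} Averaging \eqref{eq:tail-final} over $A$ and using Proposition~\ref{prop:amplification} gives
\[
 \frac1{|A|}\sum_{a\in A} F_D(a)\ge C_0e^G-2C_0e^{-U^2/240}.
\]
Since $G\ge0$, we have $e^G\ge1\ge4e^{-U^2/240}$ for large $N$. Hence the right side is at least $\frac12C_0e^G$, which is \eqref{eq:FD-lower}.
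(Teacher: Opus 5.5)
Your proposal is correct and follows the paper's proof essentially step for step. You use the same CRT factorization of $\Gamma_{d,e}(a)$ into nonnegative local factors, the same Rankin trick with exponent $1/\log X=1/(6\log U)$ together with symmetry in $d,e$, and the same final averaging against Proposition~\ref{prop:amplification}. The differences are cosmetic. You keep the denominator $C_0$ and get $(2e-1)t_p^2$ where the paper has $2et_p^2$; this is valid because $\frac{1+x}{1+z}\le 1+(x-z)\le e^{x-z}$ when $x\ge z\ge0$. You also track $\log D$ more precisely, obtaining $U^2/60$ where the paper settles for $U^2/120$.
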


\begin{proof}
Let
\[
 \Gamma_{d,e}(a)=
 \E_{x\bmod Q}
 g_d(a-\varepsilon x^2)g_e(a-\varepsilon x^2).
\]
By the Chinese remainder theorem and equations~\eqref{eq:linear-bias}, ~\eqref{eq:quadratic-moment}, we have
\[\Gamma_{d,e}(a)
 =
 \prod_{p\mid de/(d,e)^2}\kappa_p
 \prod_{p\mid(d,e)}(1+\varepsilon v_p(a)).
\]
Since every factor is nonnegative, $\Gamma_{d,e}(a)\ge 0$.
Expanding the two squares in $F(a),F_D(a)$ gives
\[
 F(a)-F_D(a)
 =
 \sum_{\substack{d,e\mid Q\\d>D\ \mathrm{or}\ e>D}}
 t_dt_e\Gamma_{d,e}(a)
 \ge0.
\]
Set $\alpha=1/\log X$.
For $d>D$ we have $1\le(d/D)^\alpha$;
this is Rankin's trick.
The union bound and symmetry in $d,e$ therefore imply
\begin{align}
 F(a)-F_D(a)
 &\le
 2D^{-\alpha}
 \sum_{d,e\mid Q}d^\alpha t_dt_e\Gamma_{d,e}(a)
 \notag\\
 &=
 2D^{-\alpha}
 \prod_{p\in\mathcal P}
 \left(
 1+(1+p^\alpha)t_p\kappa_p+p^\alpha t_p^2(1+\varepsilon v_p(a))
 \right).
 \label{eq:rankin-product}
\end{align}

Since $p\le X$, we have $p^\alpha\le e$, while
$0\le 1+\varepsilon v_p(a)\le2$.
Using $\log(1+y)\le y$ for $y\ge0$ and dividing
by \eqref{eq:C0}, we obtain
\begin{align}
 F(a)-F_D(a)
 &\le
 2C_0\exp\left(
 -\frac{\log D}{\log X}
 +(1+e)G+2eB
 \right)
 \notag\\
 &\le
 2C_0\exp\left(
 -\frac{\log D}{\log X}+6(G+B)
 \right).
 \label{eq:tail-before-parameters}
\end{align}

Write $L=\log N$ and $\ell=\log\log N$.
Then $U^2=L/\ell$, and for sufficiently large $N$,
\[
 \log D\ge L/40,\qquad
 \log X=6\log U=3(\ell-\log\ell)\le3\ell.
\]
Thus
\[
 \frac{\log D}{\log X}\ge U^2/120.
\]
The identity $G=\eta UW$ and \eqref{eq:sign-selection}
give $\frac{\log 2}{8}\eta U\leq G\leq \eta U$. Since $\kappa_p^2\ge1/p$,
\[
 t_p^2
 =
 \frac{\eta^2U^2}{p^2\kappa_p^2}
 \le\frac{\eta^2U^2}{p}.
\]
Summing over $p$ gives the bound $B\leq \eta^2U^2$. From the fixed choice of $\eta$,
\[
 6(G+B)
 \le6\eta U+6\eta^2U^2
 \le U^2/240
\]
for sufficiently large $N$.
Substitution into \eqref{eq:tail-before-parameters}
proves \eqref{eq:tail-final}.

Averaging this estimate and applying
Proposition~\ref{prop:amplification} gives
\[
 \frac{1}{|A|}\sum_{a\in A}F_D(a)
 \ge
 C_0\left(e^G-2e^{-U^2/240}\right)
 \ge\frac12C_0e^G.\qedhere
\]
\end{proof}

\begin{remark}
We have not asserted the pointwise inequality
$R_D(s)^2\le R(s)^2$, which need not hold.
The truncation comparison is valid after averaging along
$a-\varepsilon x^2$, because every coefficient in the
resulting quadratic form is nonnegative.
\end{remark}

Next we choose the kernel $K(s) = R_D(s)^2$. We shall use the following elementary estimate for periodic functions.
\begin{lemma}\label{lem:periodic}
Let $\psi:\Z\to\mathbb C$ have period $q$, with
$\|\psi\|_\infty\le M$.
For any interval $J$ of $L$ consecutive integers,
\[
 \left|
 \sum_{n\in J}\psi(n)
 -
 L\,\E_{n\bmod q}\psi(n)
 \right|
 \le2qM.
\]
\end{lemma}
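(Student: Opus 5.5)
The plan is to split $J$ into complete periods plus a short remainder. Write $L=kq+r$ with $k=\lfloor L/q\rfloor$ and $0\le r<q$. Then $J$ is the disjoint union of $k$ blocks of $q$ consecutive integers, followed by a final block $J'$ of $r$ consecutive integers. Because $\psi$ has period $q$, any block of $q$ consecutive integers runs over a complete residue system modulo $q$. So each full block contributes exactly
\[
        \sum_{n\bmod q}\psi(n)=q\,\E_{n\bmod q}\psi(n).
\]
Summing the full blocks therefore gives $kq\,\E_{n\bmod q}\psi(n)$.

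Next compare this with the main term. We have
\[
        \sum_{n\in J}\psi(n)-L\,\E_{n\bmod q}\psi(n)
        =\sum_{n\in J'}\psi(n)-r\,\E_{n\bmod q}\psi(n).
\]
Both terms on the right are bounded by $rM$: the first by the triangle inequality over $r$ terms each of size at most $M$, and the second because $|\E\psi|\le M$.

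This bounds the difference by $2rM<2qM$, which is the claimed estimate. There is no genuine obstacle. The only care needed is the degenerate case $L<q$, where $k=0$, and there the same bound $2LM\le2qM$ holds directly.
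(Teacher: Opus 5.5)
Your proposal is correct and matches the paper's proof: you split $J$ into complete blocks of length $q$, which contribute their exact means, plus a remainder of length $r<q$. The remainder sum and $r\,\E_{n\bmod q}\psi(n)$ are each bounded by $rM\le qM$, which gives the stated estimate.
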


\begin{proof}
Partition $J$ into complete consecutive blocks of length
$q$ and a remaining block of length $r<q$.
The complete blocks contribute their exact means.
The remaining sum and $r$ times the complete mean
both have absolute value at most $rM$.
\end{proof}

\begin{proposition}\label{prop:lifting}
For every $a\in A$,
\begin{equation}\label{eq:lift-x}
 \left|
 \sum_{x=1}^H R_D(a-\varepsilon x^2)^2
 -
 H F_D(a)
 \right|
 \le2D^5.
\end{equation}
Also,
\begin{equation}\label{eq:lift-m}
 \left|
 \sum_{m=-N}^{2N}R_D(m)^2
 -
 (3N+1)C_D
 \right|
 \le2D^5.
\end{equation}
\end{proposition}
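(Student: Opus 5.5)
The plan is to apply Lemma~\ref{lem:periodic} twice, to $\psi(x)=R_D(a-\varepsilon x^2)^2$ and to $\psi(m)=R_D(m)^2$. For this we need three ingredients: a period $q$, a sup-norm bound $M$ with $qM\le D^5$, and the identification of the periodic mean with the averages over $x\bmod Q$ and $s\bmod Q$ that define $F_D(a)$ and $C_D$.

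\emph{Periodicity.} Each $g_d(s)=\prod_{p\mid d}g_p(s)$ has period $d$, since $g_p$ depends only on $s\bmod p$. Hence $R_D=\sum_{d\in\mathcal D}t_dg_d$ has period $q:=\operatorname{lcm}(\mathcal D)$, and so does $R_D^2$. Both $m\mapsto R_D(m)^2$ and $x\mapsto R_D(a-\varepsilon x^2)^2$ are then $q$-periodic. The lcm could be as large as $Q$, which is too big, so I would not use the crude period. Instead I would expand
\[
R_D^2=\sum_{d,e\in\mathcal D}t_dt_e\,g_dg_e ,
\]
and apply Lemma~\ref{lem:periodic} to each product $g_dg_e$ separately. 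That product has period $\operatorname{lcm}(d,e)\le de\le D^2$. For each pair, the mean over one period equals the mean over $Q$, because $\operatorname{lcm}(d,e)\mid Q$. Summing over pairs gives
\[
\Bigl|\sum_{x=1}^H R_D(a-\varepsilon x^2)^2-H F_D(a)\Bigr|\le \sum_{d,e\in\mathcal D}2D^2\,t_dt_e\,\|g_dg_e\|_\infty ,
\]
and the analogous bound holds for the sum over $m\in[-N,2N]$, which is an interval of $3N+1$ consecutive integers.

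\emph{Sup-norm bound.} By Lemma~\ref{lem:char_mean}, $\|g_p\|_\infty\le\sqrt p$, so $\|g_d\|_\infty\le\sqrt d\le\sqrt D$. For $t_p=\eta U/(p\kappa_p)$, the bound $\kappa_p\ge p^{-1/2}$ gives $t_p\le\eta U/\sqrt p<1$, because $p>U^3$. Hence $t_d\le1$ for every $d$, and $t_d\sqrt d\le(\eta U)^{\omega(d)}\le1$. Therefore each term in the double sum is at most $2D^2\cdot D=2D^3$, using $\|g_dg_e\|_\infty\le\sqrt{de}\le D$. The number of pairs is at most $|\mathcal D|^2\le D^2$. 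The total error is thus at most $2D^5$, which is exactly \eqref{eq:lift-x} and \eqref{eq:lift-m}.

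The argument is uniform in $a$, since only sup-norms and periods enter. The main obstacle is avoiding the period $Q$: the pairwise expansion handles this, and the only delicate part is the bookkeeping of $t_d\le1$ and the sup-norm bounds, which follow directly from $p>U^3$.
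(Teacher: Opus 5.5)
Your proposal is correct and is essentially the paper's proof. You expand $R_D^2=\sum_{d,e\in\mathcal D}t_dt_eg_dg_e$ and apply Lemma~\ref{lem:periodic} to each product, which has period $\mathrm{lcm}(d,e)\le D^2$ dividing $Q$ and sup norm at most $\sqrt{de}\le D$. You then use $0<t_dt_e\le1$, which follows from $t_p\le\eta U/\sqrt p<1$, together with the fact that there are at most $D^2$ pairs.

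One aside is wrong, though you never use it: the second inequality in $t_d\sqrt d\le(\eta U)^{\omega(d)}\le1$ fails for large $N$, since $\eta U\to\infty$.
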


\begin{proof}
For $d,e\in\mathcal D$, put $q_{d,e}=\mathrm{lcm}(d,e)$.
Both functions
\[
 s\longmapsto g_d(s)g_e(s)
\]
and
\[
 x\longmapsto
 g_d(a-\varepsilon x^2)g_e(a-\varepsilon x^2)
\]
have period $q_{d,e}$.
The latter assertion follows because
\[
 (x+q_{d,e})^2\equiv x^2\pmod{q_{d,e}}.
\]
Furthermore,
\[
 q_{d,e}\le de\le D^2,\qquad
 \|g_dg_e\|_\infty\le\sqrt{de}\le D,
\]
by \eqref{eq:uniform-moments}.
Lemma~\ref{lem:periodic} therefore gives error at most
$2D^3$ for each pair $(d,e)$.
Its complete-period mean equals the mean modulo $Q$,
because $q_{d,e}$ divides $Q$.

Since $\kappa_p\ge p^{-1/2}$ and $p>U^3$ for every
$p\in\mathcal P$,
\[
 t_p=\frac{\eta U}{p\kappa_p}
 \le \frac{\eta U}{\sqrt p}
 < \frac{\eta}{\sqrt U}<1
\]
for sufficiently large $N$. Hence $0<t_d\le1$ for every
$d\in\mathcal D$. There are at most $D^2$ ordered pairs in
$\mathcal D^2$, so $0<t_dt_e\le1$ for every such pair.
Expand $R_D^2$, apply the periodic estimate to each term,
and sum the errors. This proves both assertions.
\end{proof}

The bound on each individual period is what matters here.
We never approximate an integer interval by a complete
period modulo $Q$.

Now we are ready to prove Theorem~\ref{thm:largerrange}.
\begin{proof}[Proof of Theorem~\ref{thm:largerrange}]
Use the sign, primes, and weights constructed above.
Let
\[
 \mathcal S
 =
 \sum_{a\in A}\sum_{x=1}^H
 R_D(a-\varepsilon x^2)^2.
\]
By equations~\eqref{eq:FD-lower} and \eqref{eq:lift-x},
\[
 \mathcal S
 \ge
 \frac12|A|H C_0e^G-2|A|D^5.
\]
Since $D^5\le N^{1/4}$ and
$H\ge\sqrt N/2$ for large $N$, we may assume
$2D^5\le H/4$.
As $C_0e^G\ge1$, this gives
\[
 \mathcal S\ge\frac14|A|H C_0e^G.
\]
On the other hand, since 
\[C_D = \E_{s\bmod Q}\sum_{d,e\in \mathcal{D}}t_dt_eg_d(s)g_e(s) = \sum_{d\in\mathcal{D}}t_d^2\leq C_0,\]
together with equation~\eqref{eq:lift-m} give
\[
 \sum_{m=-N}^{2N}R_D(m)^2
 \le
 (3N+1)C_0+2D^5
 \le6NC_0.
\]
By taking the kernel $K(s)=R_D(s)^2$ in
inequality~\eqref{eq:kernel}, with $-\varepsilon$ in place of
$\varepsilon$, we obtain
\[
 \max_{m\in\Z}|A\cap\{m+\varepsilon x^2:x\in\Z\}|
 \ge \frac{|A|H}{24N}e^G
 \ge \frac{|A|}{48\sqrt N}
 \exp\left(\frac{\eta\log2}{8}U\right).
\]
This proves the theorem, for example with
\[
 C=\frac1{48},
 \qquad
 c=\frac{\eta\log2}{8},
\]
after choosing $N_0$ large enough for all preceding estimates.
\end{proof}

\begin{remark}
    Similar to the construction of Selberg sieve, here we also have an optimization problem. More precisely, we would like to choose the local weights $t_p\ge 0$ so that $\sum_{p}t_p\kappa_p$ is maximized under the constraints
    \[\sum_{p}(t_p\kappa_p)^2+t_p^4\ll \sum_{p}\frac{t_p\kappa_p}{1+t_p^2},\quad \sum_{p}t_p\kappa_p+t_p^2\ll \frac{\log D}{\log X},\]
    corresponding to the key estimates~\eqref{eq:saving}, \eqref{eq:tail-before-parameters}. Our choice of $t_p$ may not be optimal; it is possible to further improve the bounds, but reaching $\exp(c\log N/\log\log N)$ probably requires some new ideas.
\end{remark}

\section{A larger-sieve estimate}\label{sec:second-moment}

The paired argument uses the local hypothesis up to
$\min(|A|,|B|)$ to find many logarithmic-sized primes at which neither
set occupies too few residue classes.  The following standard pair count, underlying
Gallagher's larger sieve~\cite{Gallagher1971}, supplies this information.

\begin{lemma}\label{lem:collision-moment}
Let \(A\subseteq[N]\) be nonempty, and let \(2\le Q\le\sqrt N\). Then
\[
        \sum_{p\le Q}\frac{\log p}{|A_p|}
        \le \log N+O\left(\frac{Q}{|A|}\right).
\]
\end{lemma}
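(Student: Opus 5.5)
We prove this by the standard collision count underlying Gallagher's larger sieve. Let \(T=\#\{(a,b)\in A^2:a\ne b\}\) with the weight \(\sum_{p\le Q,\ p\mid a-b}\log p\), and count this weighted number of collisions in two ways.

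\emph{Lower bound.} For each prime \(p\), write \(n_p(y)=\#\{a\in A:a\equiv y\pmod p\}\). Cauchy--Schwarz gives
\[
\#\{(a,b)\in A^2:a\equiv b\ (p)\}=\sum_y n_p(y)^2\ge\frac{|A|^2}{|A_p|}.
\]
Removing the \(|A|\) diagonal pairs, we get
\[
T=\sum_{p\le Q}\log p\left(\sum_y n_p(y)^2-|A|\right)\ge|A|^2\sum_{p\le Q}\frac{\log p}{|A_p|}-|A|\sum_{p\le Q}\log p.
\]

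\emph{Upper bound.} For \(a\ne b\) in \([N]\), the primes \(p\le Q\) dividing \(a-b\) are distinct and their product divides \(|a-b|<N\). Hence
\[
\sum_{p\le Q,\ p\mid a-b}\log p\le\log|a-b|\le\log N,
\]
and so \(T\le|A|(|A|-1)\log N\le|A|^2\log N\).

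\emph{Combining.} Putting the two bounds together and dividing by \(|A|^2\) gives
\[
\sum_{p\le Q}\frac{\log p}{|A_p|}\le\log N+\frac{1}{|A|}\sum_{p\le Q}\log p.
\]
By Chebyshev's bound \(\sum_{p\le Q}\log p\ll Q\), the error term is \(O(Q/|A|)\), which is the claim.

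The hypothesis \(Q\le\sqrt N\) is not needed for this argument; it only marks the range where the statement is used. Nothing here is a real obstacle. The only step needing care is the upper bound: for each pair we must bound by \(\log N\) the sum of \(\log p\) over the distinct small primes dividing \(a-b\), and this is valid because \(a-b\ne0\).
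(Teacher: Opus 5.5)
Your proof is correct and follows the paper's argument: Cauchy--Schwarz bounds below the number of ordered pairs of distinct elements congruent modulo $p$, each such pair contributes at most $\log N$ to the weighted sum of $\log p$ over the primes dividing the difference, and Chebyshev's bound absorbs the error into $O(Q/|A|)$. You are also right that the hypothesis $Q\le\sqrt N$ is not used.
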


\begin{proof}
Put \(M=|A|\), and let \(m_p(h)\) count the elements of \(A\) congruent to
\(h\pmod p\).  By Cauchy--Schwarz, the number of ordered pairs of distinct
elements of \(A\) that are congruent modulo \(p\) is
\[
        \sum_{h\in\F_p}m_p(h)^2-M
        \ge \frac{M^2}{|A_p|}-M.
\]
On the other hand, each distinct pair \((a,a')\) contributes at most
\(\log N\) to the sum of \(\log p\) over primes dividing \(a-a'\).
Consequently,
\[
        \sum_{p\le Q}\log p
        \left(\frac{M^2}{|A_p|}-M\right)
        \le M(M-1)\log N.
\]
Divide by \(M^2\) and use Chebyshev's bound
\(\sum_{p\le Q}\log p\ll Q\).
\end{proof}

\begin{corollary}\label{cor:twoset-good-primes}
Fix \(D,C\ge0\). There are constants \(J=J(D,C)\ge8\),
\(\eta>0\), \(c_1=c_1(D,C)>0\), and \(N_0=N_0(D,C)\) with the following
property. Suppose that \(N\ge N_0\), that \(A,B\subseteq[N]\) are nonempty,
and that
\[
        |A|,|B|\le\sqrt N,
        \qquad
        L=\max\left(\frac{\sqrt N}{|A|},
                    \frac{\sqrt N}{|B|}\right)
        \le(\log N)^D.
\]
Assume that \(|A_p|+|B_p|\le p+C\) for every prime
\(p\le\min(|A|,|B|)\). Put
\[
        z=\eta\log N,\qquad R=J(1+\log L),
\]
and define
\[
        \mathcal P_0=
        \left\{p:\frac z2<p\le z,\quad p\equiv3\pmod4,\quad
        |A_p|,|B_p|\ge\frac pR\right\}.
\]
Then
\[
        \sum_{p\in\mathcal P_0}p^{-1/2}
        \ge c_1\frac{\sqrt{\log N}}{\log\log N},
        \qquad
        \prod_{p\le z}p\le N^{1/10}.
\]
\end{corollary}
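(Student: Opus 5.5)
The plan is to apply Lemma~\ref{lem:collision-moment} to each of $A$ and $B$ with $Q=z$. This bounds the logarithmic mass of primes in $(z/2,z]$ at which either set occupies few classes. The complement of these bad primes inside $\{p\equiv3\pmod4,\ z/2<p\le z\}$ must then still carry most of the $p^{-1/2}$ mass.

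\textbf{Step 1: the product bound.} The bound $\prod_{p\le z}p\le N^{1/10}$ is just Chebyshev: $\sum_{p\le z}\log p\le K z=K\eta\log N$. We therefore fix $\eta\le 1/(10K)$, with $K$ absolute. We also need $z\le\sqrt N$ for the lemma, which is trivial for large $N$.

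\textbf{Step 2: bounding the bad primes for one set.} Let $E\in\{A,B\}$ and let $\mathcal B_E$ be the set of $p\in(z/2,z]$ with $|E_p|<p/R$. By Lemma~\ref{lem:collision-moment} with $Q=z$,
\[
\sum_{p\in\mathcal B_E}\frac{R\log p}{p}\le\sum_{p\le z}\frac{\log p}{|E_p|}\le\log N+O\left(\frac{z}{|E|}\right).
\]
Since $|E|\ge\sqrt N/L\ge\sqrt N/(\log N)^D$, the error $O(z/|E|)$ is $o(1)$. On $(z/2,z]$ we have $\log p/p\asymp\log z/z$, so
\[
|\mathcal B_E|\ll \frac{z\log N}{R\log z}=\frac{\eta^{-1}}{R}\cdot\frac{z^2}{\log z}\cdot\frac1{\eta}\cdot\eta\ \ (\text{i.e. } |\mathcal B_E|\ll \frac{z}{\eta R\log z}).
\]
We need this count to be at most half of $|\{z/2<p\le z,\ p\equiv3\ (4)\}|\asymp z/\log z$. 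That holds once $R\ge J$ with $J$ large in terms of $\eta$ (absolute). The factor $1+\log L$ in $R$ only helps here; it is present for use later in the paired argument. Each prime in $(z/2,z]$ has $p^{-1/2}\asymp z^{-1/2}$. Removing $\mathcal B_A\cup\mathcal B_B$ therefore leaves at least half the primes, with total $\sum p^{-1/2}\gg (z/\log z)z^{-1/2}\gg\sqrt{\log N}/\log\log N$.

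\textbf{Step 3: the obstacle.} The main subtlety is that the lemma bounds $\sum\log p/|E_p|$ by $\log N$, which is of the same order $z/\eta$ as the count of primes times $\log z/z$. Hence only the large constant $R\ge J$ beats it, and $J$ must be chosen after $\eta$. Checking that the constants are compatible is the key point: $\eta$ is fixed by Step 1, and then $J=J(\eta)$.

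Note that the hypothesis $|A_p|+|B_p|\le p+C$ is not needed for this corollary. Presumably it is used downstream to convert $|A_p|\ge p/R$ into $|B_p|\le p(1-1/R)+C$.
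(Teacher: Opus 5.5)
There is a genuine gap in Step~2. Your bound $|\mathcal B_E|\ll z\log N/(R\log z)$ is correct, but since $\log N=z/\eta$ it equals $z^2/(\eta R\log z)$; the ``i.e.'' drops a factor of $z$. The window contains only $\asymp z/\log z$ primes, so this bound is useless unless $R\gg\log N$, whereas here $R=J(1+\log L)\ll_D J\log\log N$.

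In fact Lemma~\ref{lem:collision-moment} at scale $Q=z$ carries no information at all. By your own Step~1, the trivial bound $|E_p|\ge1$ already gives $\sum_{p\le z}\log p/|E_p|\le\sum_{p\le z}\log p\le\frac1{10}\log N$.

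Your closing remark is also false: the hypothesis $|A_p|+|B_p|\le p+C$ is essential. Let $N$ be a perfect square, and let $A=B$ consist of $\sqrt N$ multiples of the product of the window primes $p\equiv3\pmod4$. This product is at most $N^{1/10}$, so such multiples exist. Every other hypothesis then holds with $L=1$. Yet $|A_p|=1<p/R$ at every window prime, so $\mathcal P_0=\emptyset$.

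The missing idea is to apply the lemma at the large scale $Q=M=\min(|A|,|B|)=\sqrt N/L$, where the main terms almost cancel, and to let the pair hypothesis produce that cancellation. Note that $z\le M\le\sqrt N$ and the error terms are $O(M/|E|)=O(1)$. By $1/u+1/v\ge4/(u+v)$, the quantity $d_p=1/|A_p|+1/|B_p|-4/(p+C)$ is nonnegative for every $p\le M$. Add the lemma for $A$ and for $B$, and subtract $4\sum_{p\le M}\log p/(p+C)=4\log M+O_C(1)$. This gives
\[
\sum_{p\le M}d_p\log p\le2\log N-4\log M+O_C(1)\ll_C1+\log L .
\]
A window prime with $|A_p|<p/R$ or $|B_p|<p/R$ has $d_p\ge(R-4)/p$. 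These exceptional primes therefore satisfy $\sum\log p/p\ll_C(1+\log L)/(R-4)\ll_C1/J$. Hence they carry $p^{-1/2}$-mass $\ll_C J^{-1}\sqrt z/\log z$, which is a small fraction of the full window mass $\gg\sqrt z/\log z$ once $J$ is large. This is also where the factor $1+\log L$ in $R$ is genuinely needed, rather than merely harmless.
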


\begin{proof}
Put \(M=\min(|A|,|B|)=\sqrt N/L\). For sufficiently large \(N\),
we have \(M\ge z\). For every prime \(p\le M\), define
\[
        d_p=\frac1{|A_p|}+\frac1{|B_p|}-\frac4{p+C}\ge0,
\]
where nonnegativity follows from
\(1/u+1/v\ge4/(u+v)\) and the local hypothesis.
Apply Lemma~\ref{lem:collision-moment} to both sets with \(Q=M\), and use
Mertens' estimate
\[
        \sum_{p\le M}\frac{\log p}{p+C}=\log M+O_C(1).
\]
This gives
\[
        \sum_{p\le M}d_p\log p
        \le2\log N-4\log M+O_C(1)
        \ll_C1+\log L.
\]
If \(z/2<p\le z\) and one of \(|A_p|,|B_p|\) is smaller than \(p/R\),
then
\[
        d_p\ge\frac{R-4}{p}.
\]
Thus, writing \(\mathcal E\) for these exceptional primes, we obtain
\[
        \sum_{p\in\mathcal E}\frac{\log p}{p}
        \ll_C\frac{1+\log L}{R-4}\ll_C\frac1J.
\]
On \((z/2,z]\), we have
\(p^{-1/2}\ll(\sqrt z/\log z)(\log p/p)\), so
\[
        \sum_{p\in\mathcal E}p^{-1/2}
        \ll_C\frac1J\frac{\sqrt z}{\log z}.
\]
The prime number theorem in the progression \(3\pmod4\) gives
\[
        \sum_{\substack{z/2<p\le z\\p\equiv3\pmod4}}p^{-1/2}
        \gg\frac{\sqrt z}{\log z}.
\]
Choosing $J$ sufficiently large proves the first conclusion. For the
second, use $\sum_{p\le z}\log p\ll z$ and take $\eta$ sufficiently small.
\end{proof}

\section{The paired theorem and the inverse Goldbach application}\label{sec:twoset}

\begin{proof}[Proof of Theorem~\ref{thm:twoset-density}]
Let \(z=\eta\log N\), \(R=J(1+\log L)\), and \(\mathcal P_0\) be as in
Corollary~\ref{cor:twoset-good-primes}. For \(p\in\mathcal P_0\), the bounds
\(|A_p|,|B_p|\ge p/R\) and \(|A_p|+|B_p|\le p+C\) imply
\[
        |A_p|,|B_p|\le p-\frac pR+C
        \le\left(1-\frac1{2R}\right)p
\]
for sufficiently large \(N\). Here we used \(p\asymp\log N\) and
\(R\ll_{D,C}1+\log\log N\).
Consequently, for either \(E=A\) or \(E=B\),
\[
        w_{E,p}:=\frac p{|E_p|}-1\ge\frac1{2R},
        \qquad
        \sqrt{\frac{w_{E,p}}p}\gg\frac1{\sqrt{Rp}}.
\]
Since \(\prod_{p\in\mathcal P_0}p\le N^{1/10}
\le\lfloor\sqrt N\rfloor/2\), Proposition~\ref{prop:variance-amplification}
therefore gives quadratics \(q_A^{(0)},q_B^{(0)}\) such that, for
\(E\in\{A,B\}\),
\[
        |E\cap q_E^{(0)}(\Z)|
        \gg_{D,C}\frac{|E|}{\sqrt N}
        \exp\left(
        c\frac{\sqrt{\log N}}
        {\log\log N\,\sqrt{1+\log L}}\right).
\]

For the product bound, use all the primes in
\[
        \mathcal Q=\{p:z/2<p\le z,\ p\equiv3\pmod4\}.
\]
Their product is at most \(N^{1/10}\), and
\(\sum_{p\in\mathcal Q}p^{-1/2}
\gg\sqrt{\log N}/\log\log N\).
Partition them into
\[
        \mathcal Q_A=\{p\in\mathcal Q:|A_p|\le p/2+C/2\},
        \qquad
        \mathcal Q_B=\mathcal Q\setminus\mathcal Q_A.
\]
The local hypothesis ensures that \(|B_p|\le p/2+C/2\) on
\(\mathcal Q_B\). Corollary~\ref{prop:restricted-density}, applied to
each set with its assigned primes, gives quadratics \(q_A^{(1)},q_B^{(1)}\)
for which
\[
        |A\cap q_A^{(1)}(\Z)|\,|B\cap q_B^{(1)}(\Z)|
        \gg_C\frac{|A||B|}{N}
        \exp\left(c_C\frac{\sqrt{\log N}}{\log\log N}\right).
\]
If an assigned prime set is empty, choose any quadratic meeting the
corresponding nonempty set; its required bound follows from
\(|A|,|B|\le\sqrt N\).

For each \(E\in\{A,B\}\), choose \(q_E\) from
\(q_E^{(0)},q_E^{(1)}\) to maximize \(|E\cap q_E(\Z)|\).
These choices satisfy the individual estimates and the product estimate
simultaneously.
\end{proof}

\begin{proof}[Proof of Corollary~\ref{cor:goldbach-quadratic}]
Choose \(T\) such that every element of
\((\mathcal A+\mathcal B)\cap[T,\infty)\) is prime. By
Elsholtz--Harper~\cite[Theorem~2.6]{ElsholtzHarper2015}, there is a constant
\(c_*>0\) such that, for all sufficiently large \(N\),
\[
        |\mathcal A\cap[N]|,\ |\mathcal B\cap[N]|
        \ge c_*\frac{\sqrt N}{\log N\log\log N}.
\]
Put
\[
        Y=\left\lfloor
        \frac{c_*}{4}\frac{\sqrt N}{\log N\log\log N}
        \right\rfloor.
\]
For sufficiently large \(N\), we have \(2Y\ge T\), and removing the
integers at most \(Y\) leaves at least \(Y\) elements in each summand.
Choose
\[
        A_N\subseteq\mathcal A\cap(Y,N],\qquad
        B_N\subseteq\mathcal B\cap(Y,N],\qquad
        |A_N|=|B_N|=Y.
\]
For every prime \(p\le Y\), the residue sets \((A_N)_p\) and
\(-(B_N)_p\) are disjoint: otherwise \(p\) would divide a prime
\(a+b>2Y\ge T\) with \(a+b>p\). Hence
\[
        |(A_N)_p|+|(B_N)_p|\le p\qquad(p\le Y).
\]
We may therefore apply Theorem~\ref{thm:twoset-density} with \(C=0\) and
\(D=2\), since
\[
        L=\frac{\sqrt N}{Y}\asymp\log N\log\log N
        \le(\log N)^2,
        \qquad 1+\log L\ll\log\log N.
\]
Its individual and product estimates give the required quadratics for
\(A_N,B_N\). Indeed, the factors \(Y/\sqrt N\) and \(Y^2/N\) are absorbed
by decreasing the positive constants in the exponents, because
\[
        \log(\log N\log\log N)
        =o\left(\frac{\sqrt{\log N}}{(\log\log N)^{3/2}}\right).
\]
Finally, \(A_N\subseteq\mathcal A\cap[N]\) and
\(B_N\subseteq\mathcal B\cap[N]\), so the same bounds hold for the original
summands.
\end{proof}

\section{A matching construction}\label{sec:upper}
We prove Theorem~\ref{thm:sharpness} by first constructing suitable
residue sets over each $\F_p$, then combining them by the Chinese
remainder theorem and randomly thinning the resulting set.
\subsection{Local residue sets} 
Put
\[
        g(X)=X(X-1)(X-2).
\] For each odd prime $p$, let $\chi_p$ be the quadratic character of
$\F_p$, extended by $\chi_p(0)=0$. For $p>3$, set
\[
        H_p^{(0)}=\{x\in\F_p:\chi_p(g(x))=1\}.
\]
The following lemma shows that these sets can be adjusted to have
cardinality exactly $(p-1)/2$ while retaining uniform control of
their pullbacks under $z\mapsto s+az^2$.

\begin{lemma}\label{lem:unif_shift}
There is an absolute constant $C_0>0$ such that, for every odd prime
$p$, there is a set $H_p\subseteq\F_p$ with $|H_p|=(p-1)/2$ and
\[
        \#\{z\in\F_p:s+az^2\in H_p\}
        \le |H_p|(1+C_0p^{-1/2})
\]
for all $s\in\F_p$ and $a\in\F_p^\times$.
\end{lemma}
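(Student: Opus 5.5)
We will count, for fixed $s$ and $a\ne0$, the number of $z$ with $s+az^2\in H_p^{(0)}$ via characters and Weil's bound, and then trim or pad $H_p^{(0)}$ to size exactly $(p-1)/2$. Small primes ($p\le p_1$ for an absolute $p_1$, including $p=3$) are handled trivially: any set of size $(p-1)/2$ gives a count at most $p\le |H_p|(1+C_0p^{-1/2})$ once $C_0$ is large enough depending on $p_1$. So we assume $p$ is large.

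\emph{Step 1: size of $H_p^{(0)}$.} Write $\mathbf 1_{H_p^{(0)}}(x)=\tfrac12(\chi_p(g(x))+\chi_p(g(x))^2)$. Then $\chi_p(g(x))^2=1$ except at the three roots $0,1,2$. Since $g$ is squarefree of degree $3$, Weil gives $|\sum_x\chi_p(g(x))|\le 2\sqrt p$. Hence $|H_p^{(0)}|=p/2+O(\sqrt p)$.

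\emph{Step 2: pullback counts.} We have
\[
\#\{z:s+az^2\in H_p^{(0)}\}=\tfrac12\sum_z\chi_p\bigl(g(s+az^2)\bigr)+\tfrac12\bigl(p-O(1)\bigr),
\]
where the $O(1)$ term accounts for the at most $6$ values of $z$ with $g(s+az^2)=0$. The polynomial $h(z)=g(s+az^2)=\prod_{j=0}^2(s-j+az^2)$ has degree $6$. The key point is that $h$ is not a constant times a square in $\overline{\F_p}[z]$. The factors $s-j+az^2$ are pairwise coprime, since their roots are distinct as $j$ varies. Each factor is squarefree unless $s=j$, in which case it equals $az^2$. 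At most one $j$ can equal $s$, so at least two factors are squarefree of degree $2$ with simple roots, and these roots appear in $h$ with multiplicity one. Weil's bound then gives $|\sum_z\chi_p(h(z))|\le5\sqrt p$. Therefore every pullback count is $p/2+O(\sqrt p)$, uniformly in $s$ and $a$.

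\emph{Step 3: adjust the size.} If $|H_p^{(0)}|>(p-1)/2$, we delete arbitrary elements to obtain $H_p$. Deletion only decreases pullback counts, so these stay $\le p/2+O(\sqrt p)=|H_p|(1+O(p^{-1/2}))$. If $|H_p^{(0)}|<(p-1)/2$, we add $k=O(\sqrt p)$ arbitrary elements. Each added element $y$ contributes at most $\#\{z:az^2=y-s\}\le2$ to each pullback count, so the counts increase by at most $2k=O(\sqrt p)$. In both cases the bound holds with an absolute constant $C_0$.

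The main obstacle is the nonsquare verification in Step~2, specifically the degenerate case $s\in\{0,1,2\}$. This is exactly why we use a cubic $g$ with three distinct roots rather than a single quadratic: at most one factor can degenerate into a square, and the remaining factors still supply simple roots. The rest is routine bookkeeping with Weil's bound.
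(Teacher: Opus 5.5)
Your proof is correct and follows essentially the same route as the paper: a Weil bound for $|H_p^{(0)}|$, the observation that a factor $s-j+az^2$ with $j\ne s$ contributes simple roots, so that $g(s+aZ^2)$ is not a constant multiple of a square, a second Weil bound for the pullback counts, and an $O(\sqrt p)$ adjustment in which each changed element moves each count by at most two. The only differences are cosmetic: you dispose of all primes up to an absolute $p_1$ trivially instead of just $p=3$, and you note that deletions can only decrease the counts.
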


\begin{proof}
For $p=3$, take any singleton and enlarge $C_0$ if necessary.
Suppose $p>3$ and write $\chi=\chi_p$. Since
$g(X)=X(X-1)(X-2)$ is squarefree, the Weil bound
\cite[Theorem~5.41]{LN97} gives
\[
        |H_p^{(0)}|=\frac p2+O(\sqrt p).
\]

Fix $s\in\F_p$ and $a\in\F_p^\times$. We claim that
$g(s+aZ^2)$ is not a constant multiple of a square. Indeed, choose
$r\in\{0,1,2\}$ with $r\ne s$. Then $s+aZ^2-r$ has two distinct
simple roots in $\overline{\F_p}$, neither of which is a root of the
other two factors of $g(s+aZ^2)$. Thus $g(s+aZ^2)$ has a simple root
and hence is not a constant multiple of a square. Another application
of the Weil bound therefore gives
\[
        \sum_{z\in\F_p}\chi\big(g(s+az^2)\big)=O(\sqrt p)
\]
uniformly in $s$ and $a$. Since $g(s+aZ^2)$ has at most six zeros,
\[
        \#\{z\in\F_p:s+az^2\in H_p^{(0)}\}
        =\frac p2+O(\sqrt p)
\]
uniformly in $s$ and $a$. Add or remove $O(\sqrt p)$ elements to make the set have cardinality exactly $(p-1)/2$. Since each change affects the pullback count by at most two, the bound $p/2+O(\sqrt p)=|H_p|(1+O(p^{-1/2}))$ is preserved, as required.
\end{proof}

\begin{remark}
Lemma~\ref{lem:unif_shift} also admits a conceptually different proof
using Spencer's classical discrepancy theorem~\cite[Theorem~1]{Spencer1985}. In its linear-form formulation, the theorem states that for any $n$ linear forms in $n$ variables, with coefficients of absolute value at most one, one can assign each variable a value in $\{-1,1\}$ so that every form has absolute value $O(\sqrt n)$.

Let $\chi$ be the quadratic character of $\F_p$. Apply Spencer's theorem to the $p+1$ forms
\[
        \sum_{x\in\F_p}v_x,\qquad
        \sum_{x\in\F_p}\chi(x-s)v_x\quad(s\in\F_p),
\]
after adjoining one dummy variable. We obtain signs
$\varepsilon_x\in\{-1,1\}$ such that
\[
        \left|\sum_x\varepsilon_x\right|\ll\sqrt p,\qquad
        \sup_{s\in\F_p}
        \left|\sum_x\varepsilon_x\chi(x-s)\right|\ll\sqrt p.
\]
Thus, for $H_0=\{x:\varepsilon_x=1\}$,
\[
        |H_0|=\frac p2+O(\sqrt p),\qquad
        \sup_{s\in\F_p}
        \left|\sum_{x\in H_0}\chi(x-s)\right|\ll\sqrt p.
\]
Changing $O(\sqrt p)$ memberships gives a set $H_p$ of size $(p-1)/2$ while preserving the character-sum bound. Since, for every $a\in\F_p^\times$,
\[
        \#\{z\in\F_p:s+az^2\in H_p\}
        =|H_p|+\chi(a)\sum_{x\in H_p}\chi(x-s),
\]
we recover Lemma~\ref{lem:unif_shift}. Thus the lemma may be viewed either as a consequence of a character-sum estimate or as a discrepancy statement proved using probabilistic-combinatorial methods.

This construction also shows that the $p^{-1/2}$ scale of the local logarithmic gain in Proposition~\ref{prop:local-entropy} is optimal up to constants in the half-density regime. Indeed, let $\mu_p$ be the uniform measure on $H_p$ and put $R_{s,a}(y)=\#\{z\in\F_p:y=s+az^2\}.$ Lemma~\ref{lem:unif_shift} gives
\[
        \mathbb E_{\mu_p}R_{s,a}\le 1+C_0p^{-1/2}
\]
uniformly in $s\in\F_p$ and $a\in\F_p^\times$. Hence, for any convex combination $Z$ of the functions $R_{s,a}$, Jensen's inequality gives
\[
        \mathbb E_{\mu_p}\log Z
        \le \log \mathbb E_{\mu_p}Z
        \ll p^{-1/2},
\]
with the left-hand side interpreted as $-\infty$ if $Z$ vanishes at some point of $H_p$.
\end{remark}

\subsection{The global construction}
We first reduce the uniformity over all integral quadratics to a
family of $O(N^3)$ normalized polynomials.
\begin{lemma}\label{lem:quadratic-normalization}
Let $N\ge3$ and let $q\in\Z[x]$ have degree two. If
$|q(\Z)\cap[N]|\ge3$, then there is an integer $t$ such that $       \widetilde q(x):=q(x+t)=ax^2+bx+c$ satisfies
\[
        1\le |a|\le N-1,\qquad
        |b|\le2(N-1),\qquad
        1\le c\le N.
\]
In particular, $\widetilde q(\Z)=q(\Z)$.
\end{lemma}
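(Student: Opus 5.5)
The plan is to work with the vertex form of $q$ and to extract both coefficient bounds from a single pair of integer arguments lying on the same monotone branch of $q$.

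Write $q(x)=a_0x^2+b_0x+c_0$ with $a_0\ne0$, and put $h=-b_0/(2a_0)$, so that
\[
q(x)=a_0(x-h)^2+k .
\]
Let $S=\{x\in\Z:q(x)\in[N]\}$. This set is finite because $|q(x)|\to\infty$. Note that $q$ is strictly monotone on each of the two branches $\{x\le h\}$ and $\{x>h\}$. By hypothesis, $q(S)$ contains at least three distinct values, so $S$ has three arguments with pairwise distinct values. Two of them lie on the same branch, by pigeonhole; call them $u\ne v$. Then $q(u)\ne q(v)$, both values lie in $[N]$, and hence $|q(v)-q(u)|\le N-1$. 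Since $u-h$ and $v-h$ have the same sign (or one of them is $0$), we have
\[
|q(v)-q(u)|=|a_0|\,|v-u|\,\bigl(|u-h|+|v-h|\bigr).
\]

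Next choose $t\in S$ minimizing $|t-h|$, and set $\widetilde q(x)=q(x+t)=ax^2+bx+c$. Then $a=a_0$, $c=q(t)\in[N]$, and $b=2a_0(t-h)$. The identity $\widetilde q(\Z)=q(\Z)$ is immediate because $x\mapsto x+t$ is a bijection of $\Z$.

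For the leading coefficient, I would use the displayed identity together with $|v-u|\ge1$ and $|u-h|+|v-h|\ge|v-u|\ge1$. This gives $N-1\ge|a_0|$, and $|a|\ge1$ because $a$ is a nonzero integer. For the linear coefficient, I would use the minimality of $t$: since $u,v\in S$, we have $|u-h|,|v-h|\ge|t-h|$. Combined with $|v-u|\ge1$, this yields
\[
N-1\ge|a_0|\cdot 2|t-h|=|b|,
\]
which is even stronger than the required $|b|\le2(N-1)$.

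The only delicate point is making sure that the chosen pair really has distinct values and lies on a single branch. Reflection pairs $x$ and $2h-x$ give equal values, which is why I pick the pair by pigeonhole on branches rather than by, say, taking $t$ and its nearest neighbour in $S$. Once $u$ and $v$ lie on a common branch, strict monotonicity there rules out equal values, and the same-sign property of $u-h$ and $v-h$ gives the factorization used above.
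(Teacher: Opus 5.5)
Your proof is correct, but it takes a different route from the paper's.

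The paper's argument:
\begin{itemize}
\item It picks three integers $x_1<x_2<x_3$ with distinct values $y_i\in[N]$.
\item It uses the interpolation identity $ah_1h_2=\frac{h_2y_1+h_1y_3}{h_1+h_2}-y_2$. The right side is a convex combination of $y_1,y_3$ minus $y_2$, so $|a|h_1h_2\le N-1$.
\item It shifts by $t=x_1$ and reads off $b=(y_2-y_1)/h_1-ah_1$. This gives $|b|\le (N-1)/h_1+(N-1)/h_2\le 2(N-1)$.
\end{itemize}

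Your argument works in vertex form instead. The three distinct values are used only to force, by pigeonhole over the two monotone branches, a pair $u\ne v$ on a single branch. There the factorization $q(v)-q(u)=a_0(v-u)\bigl((v-h)+(u-h)\bigr)$ involves no cancellation. You then shift to the element of $S$ nearest the vertex, so $|b|=2|a_0||t-h|$ is controlled by the same difference.

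What each approach buys:
\begin{itemize}
\item The paper's proof is a single algebraic identity. It needs no case split and never locates the vertex.
\item Yours shows exactly where the three-value hypothesis enters. Two integers on opposite branches can have $|u+v-2h|$ arbitrarily small, so they would not bound $|a_0|$.
\item Yours also gives the sharper bound $|b|\le N-1$. This factor-$2$ gain does not matter for the application, where only $|\mathscr F_N|=O(N^3)$ is used.
\end{itemize}
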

\begin{proof}
Write $a\ne0$ for the leading coefficient of $q$. Choose integers
$x_1<x_2<x_3$ such that the values $y_i=q(x_i)$ are distinct and lie
in $[N]$, and put $h_1=x_2-x_1$ and $h_2=x_3-x_2$.
Quadratic interpolation gives
\[
        ah_1h_2
        =\frac{h_2y_1+h_1y_3}{h_1+h_2}-y_2.
\]
The fraction on the right is a convex combination of $y_1$ and $y_3$,
so $|a|h_1h_2\le N-1$. Now write
$\widetilde q(x)=q(x+x_1)=ax^2+bx+c$. Then $c=y_1$ and
\[
        b=\frac{y_2-y_1}{h_1}-ah_1,
        \qquad
        |b|\le\frac{N-1}{h_1}+\frac{N-1}{h_2}\le2(N-1). \qedhere
\]
\end{proof}

\begin{proof}[Proof of Theorem~\ref{thm:sharpness}]
Fix $0<\theta<1/2$ and put
\[
        y=\theta\log N,\qquad Q=\prod_{p\le y}p,\qquad
        S=\sum_{3\le p\le y}p^{-1/2}.
\]
The prime number theorem and partial summation give
\begin{equation}\label{eq:cubic-prime-budget}
        Q=N^{\theta+o(1)}=o(\sqrt N),\qquad
        S\asymp_\theta\frac{\sqrt{\log N}}{\log\log N}.
\end{equation}
Take $H_p$ from Lemma~\ref{lem:unif_shift} for odd $p\le y$, set
$H_2=\{0\}$, and define
\[
        B=\{n\in[N]:n\bmod p\in H_p\text{ for every }p\le y\},
        \qquad
        \rho=\prod_{p\le y}\frac{|H_p|}{p}.
\]
Writing $k=\#\{p:p\le y\}$, we have
$\rho\ge3^{-k}=N^{-o(1)}$. There are exactly $Q\rho$ admissible
residue classes modulo $Q$. Thus
\begin{equation}\label{eq:cubic-crt-size}
        |B|=\left(\frac NQ+O(1)\right)Q\rho
             =(1+o(1))N\rho.
\end{equation}

Let
\[
        \mathscr F_N=
        \{ax^2+bx+c:a,b,c\in\Z,\ 1\le|a|\le N,
          \ |b|\le2N,\ 1\le c\le N\}.
\]
By Lemma~\ref{lem:quadratic-normalization}, every integral quadratic
whose image meets $[N]$ in at least three points has the same image
as a member of $\mathscr F_N$. Moreover,
\begin{equation}\label{eq:quadratic-family-size}
        |\mathscr F_N|=2N(4N+1)N\le10N^3.
\end{equation}

Fix $q(x)=ax^2+bx+c\in\mathscr F_N$. At an odd prime $p\nmid a$,
complete the square in $\F_p$:
\[
        q(z)=a\left(z+\frac{b}{2a}\right)^2
                   +c-\frac{b^2}{4a}.
\]
Translation of $z$ permutes $\F_p$, so Lemma~\ref{lem:unif_shift}
bounds the number of admissible parameter residues by
$|H_p|(1+C_0p^{-1/2})$. At an odd prime $p\mid a$, use the trivial
bound $p\le3|H_p|$. At $p=2$, use the bound $2=2|H_2|$,
regardless of the coefficients of $q$.
Writing $\omega(m)$ for the number of distinct prime divisors of a
positive integer $m$, with $\omega(1)=0$, the Chinese remainder
theorem therefore gives
\begin{equation}\label{eq:cubic-parameter-residues}
 \#\{z\bmod Q:q(z)\bmod p\in H_p\text{ for all }p\le y\}
 \le 2Q\rho e^{C_0S}3^{\omega(|a|)}.
\end{equation}

Split the real preimage $q^{-1}([1,N])$ at the vertex $-b/(2a)$.
On each side it is an interval, possibly empty, of length at most
$\sqrt{N/|a|}$. Indeed, completing the square reduces each side to
taking square roots of a nonnegative interval of length at most
$(N-1)/|a|$, and
$\sqrt v-\sqrt u\le\sqrt{v-u}$ for $0\le u\le v$.
Consequently, each residue class modulo $Q$ has at most
$2(\sqrt{N/|a|}/Q+1)$ integer representatives in this preimage.
Counting parameter values can only overcount the distinct polynomial
values. It follows from~\eqref{eq:cubic-parameter-residues} that
\begin{equation}\label{eq:cubic-crt-intersection}
        |B\cap q(\Z)|
        \le4\rho e^{C_0S}3^{\omega(|a|)}
              \left(\sqrt{\frac N{|a|}}+Q\right).
\end{equation}

The dependence on $a$ in this estimate is essential. We use the
standard bound $3^{\omega(m)}\ll_\varepsilon m^\varepsilon$ for each
$\varepsilon>0$. Taking $\varepsilon=1/4$ and
$\varepsilon=(1/2-\theta)/2$, respectively, and using
$|a|\le N$ and~\eqref{eq:cubic-prime-budget}, we obtain
\begin{equation}\label{eq:quadratic-boundary-error}
        \frac{3^{\omega(|a|)}}{\sqrt{|a|}}\ll1,\qquad
        \sup_{1\le m\le N}\frac{Q3^{\omega(m)}}{\sqrt N}
        =o_\theta(1).
\end{equation}

Set $M=\lfloor\sqrt N\rfloor$ and choose $A$ uniformly from the
$M$-element subsets of $B$; this is possible by
\eqref{eq:cubic-crt-size}. For $q\in\mathscr F_N$, let
$X_q=|A\cap q(\Z)|$ and $r_q=|B\cap q(\Z)|$.
For sufficiently large $N$ in terms of $\theta$, we have
$|B|\ge N\rho/2$, so~\eqref{eq:cubic-crt-intersection} and
\eqref{eq:quadratic-boundary-error} give
\[
        \E X_q=\frac{Mr_q}{|B|}
        \le 8e^{C_0S}\left(
             \frac{3^{\omega(|a|)}}{\sqrt{|a|}}
             +\frac{Q3^{\omega(|a|)}}{\sqrt N}\right) \leq C_1e^{C_0S}=:\mu_0,
\]
uniformly over $q\in\mathscr F_N$, for some absolute constant
$C_1>0$.

For any positive integer $t\le\min(r_q,M)$, a union bound over the
$t$-element subsets of $B\cap q(\Z)$ yields
\[
        \Prob(X_q\ge t)
        \le\binom{r_q}{t}\frac{(M)_t}{(|B|)_t}
        \le\left(\frac{e\mu_0}{t}\right)^t,
\]
where $(u)_t=u(u-1)\cdots(u-t+1)$. Here we used
$(M)_t/(|B|)_t\le(M/|B|)^t$. If $t>\min(r_q,M)$, the
probability is zero. Take
\[
        t=\left\lceil e^2\mu_0+10\log N\right\rceil.
\]
For each $q\in\mathscr F_N$, the tail probability is at most
$e^{-t}\le N^{-10}$. By~\eqref{eq:quadratic-family-size}, a union
bound gives a choice of $A$ for which $X_q<t$ for every
$q\in\mathscr F_N$. Lemma~\ref{lem:quadratic-normalization} then
gives the same bound for every integral quadratic whose image meets
$[N]$ in at least three points. For all remaining integral quadratics,
the intersection with $A$ has size at most two and hence is also
less than $t$. We conclude that
\[
        \sup_{\substack{q\in\Z[x]\\ \deg q=2}}
        |A\cap q(\Z)|
        <t
        \le\exp\left(K\frac{\sqrt{\log N}}{\log\log N}\right)
\]
for a constant $K=K(\theta)>0$ and sufficiently large $N$, by
\eqref{eq:cubic-prime-budget}. Finally, $A\subseteq B$ implies
$|A_p|\le(p-1)/2$ at every odd prime $p\le\theta\log N$,
and $|A_2|\le1$. This proves all the claims.
\end{proof}

\begin{remark}
The same construction applies to a logarithmic prime window
$Y\le p\le2Y$, with $Y=\eta\log N$ and fixed $0<\eta<1/2$:
the prime number theorem gives $\prod_{Y\le p\le2Y}p=N^{\eta+o(1)}$,
and the accumulated exponent again has order
$\sqrt{\log N}/\log\log N$. The leading-coefficient estimates above
remain uniform, so this variant also controls all integral quadratics.
Using every prime up to $\theta\log N$ instead gives the matching
hypothesis used above.
Theorem~\ref{thm:sharpness} is uniform over polynomials in $\Z[x]$ of
degree two, without any restriction on their coefficients; it does not
assert a uniform bound over arbitrary rational quadratics. The
construction imposes the residue restriction only at the specified
small primes, and hence does not settle sharpness for the global inverse
large sieve problem in Conjecture~\ref{conj1}.
\end{remark}

\smallskip
\section*{Acknowledgments}
During the preparation of this paper, the authors used ChatGPT by OpenAI interactively to assist in discussing, refining, and presenting parts of the manuscript. The mathematical ideas, underlying intuition, and overall direction of the work are entirely due to the authors.

\end{document}